\documentclass[final,5p,times,twocolumn]{elsarticle}

\usepackage[english]{babel}
\usepackage{amsmath,amssymb,mathtools}
\usepackage{amsthm}
\usepackage{enumitem}
\usepackage{xcolor}
\usepackage{subcaption}
\usepackage[colorlinks=true,allcolors=blue]{hyperref}
\usepackage{tikz}
\usepackage{mdframed}

\newcommand{\norm}[1]{\left\lVert #1\right\rVert}

\theoremstyle{plain}
\newtheorem{theorem}{Theorem}
\newtheorem{lemma}[theorem]{Lemma}
\newtheorem{proposition}[theorem]{Proposition}

\theoremstyle{definition}
\newtheorem{assumption}[theorem]{Assumption}
\theoremstyle{remark}
\newtheorem{remark}[theorem]{Remark}

\newcommand{\R}{\mathbb{R}}
\newcommand{\N}{\mathbb{N}}

\journal{European Journal of Control}

\begin{document}
\begin{frontmatter}

\title{Euler Time-Discretization of Funnel-Controlled Dynamics}

\author[Halle]{Thomas Berger}
\author[Wupp,IEG]{Hannes Gernandt}
\author[Halle]{Pushya Mitra}

\affiliation[Halle]{organization={Institute of Mathematics, Martin Luther University Halle-Wittenberg},
                    addressline={Theodor-Lieser-Stra\ss e 5},
                    city={Halle (Saale)},
                    postcode={06120},
                    country={Germany}}

\affiliation[Wupp]{organization={Bergische Universit\"at Wuppertal},
                    addressline={Gau\ss stra\ss e 20},
                    city={Wuppertal},
                    postcode={42119},
                    country={Germany}}

\affiliation[IEG]{organization={Fraunhofer Research Institution for Energy Infrastructures and Geotechnologies IEG},
                    addressline={Gulbener Stra\ss e 23},
                    city={Cottbus},
                    postcode={03046},
                    country={Germany}}

\begin{abstract}
We analyze explicit and implicit Euler discretizations of the continuous-time
closed-loop dynamics generated by a funnel controller for a fully actuated
nonlinear full-state system of relative degree one with scalar positive input
gain.  
A quantitative
continuous-time funnel margin provides the compactness and regularity bounds
needed for the error analysis.  For explicit Euler, a computable upper
step-size threshold guarantees a global nodewise distance from the funnel
boundary. We show that first-order convergence holds on finite time horizons and, under a~dissipativity condition (a~one-sided Lipschitz bound on the plant
dominated by the funnel damping), uniformly in time. For implicit Euler, an admissible update exists for every step size and every trajectory initialized in the funnel retains a step-size-independent nodal margin. Under dissipativity, the admissible root is unique for every step size, the discrete flow is contractive, and the first-order error bound is uniform in time. 
\end{abstract}

\begin{keyword}
Funnel control \sep time discretization \sep explicit Euler \sep implicit Euler
\sep prescribed performance \sep contractivity
\end{keyword}

\end{frontmatter}

\section{Introduction}\label{sec:introduction}
Funnel control is an output-feedback methodology for reference tracking with
prescribed transient performance and tracking accuracy.  Given a reference
signal $y_{\rm ref}$ and a positive time-varying boundary $\psi$, the tracking
error $e=y-y_{\rm ref}$ is required to satisfy
\[
    \forall\, t\ge 0:\ (e(t),t)\in\mathcal F_\psi,\quad
    \mathcal F_\psi
    :=\{(e,t)\in\mathbb R^m\times \mathbb R_{\geq0} \mid \norm{e}<\psi(t)\}.
\]
The approach originated in the work of Ilchmann, Ryan and Sangwin
\cite{Ilchmann2002}, building on the high-gain adaptive-control line
exemplified by Miller and Davison \cite{Miller1991}.  It was subsequently
developed for broad nonlinear system classes of known relative degree
\cite{Ilchmann2005,Ilchmann2007,Berger2021}; a recent overview is given in
\cite{Berger2025}.  A characteristic feature is that the continuous-time
feedback uses the measured tracking error and qualitative information about
the plant class, rather than a numerical plant model.

The continuous-time theory covers input constraints and chemical-reactor
models \cite{Ilchmann2004}, relative-degree-two systems \cite{Hackl2013},
higher-relative-degree designs and funnel pre-compensation
\cite{Berger2018a,Berger2018b}, differential-algebraic and electrical-circuit
models \cite{Berger2014}, and several infinite-dimensional settings which we do not recall here because they are beyond the scope of this present work. These results establish continuous-time funnel feasibility, but they do not by themselves imply that a time-discrete approximation of the singular closed loop remains admissible.

We isolate this numerical question for the fully actuated full-state prototype system
\begin{equation}\label{eq:system}
    \dot y(t)=f(y(t))+g u(t),
\end{equation}
where $y,u:\mathbb R_{\geq0}\to\mathbb R^m$, $g>0$, and
$f:\mathbb R^m\to\mathbb R^m$ is locally Lipschitz. The funnel controller is of the form
\begin{equation}\label{eq:FC}
    u(t)=-\frac{e(t)}{\psi(t)-\norm{e(t)}}.
\end{equation}
The denominator becomes singular as the error approaches the funnel boundary and
prevents boundary contact in continuous time, see
Figure~\ref{Fig:funnel}.  
A general positive definite
high-frequency matrix $G\in\R^{m\times m}$ instead of $gI_m$ in~\eqref{eq:system} requires a separate analysis and is out of the scope of the present paper.

\begin{figure}[t]
\centering
\begin{tikzpicture}[scale=0.43]
\tikzset{>=latex}
  \filldraw[color=gray!25] plot[smooth] coordinates {(0.15,4.7)(0.7,2.9)(4,0.4)(6,1.5)(9.5,0.4)(10,0.333)(10.01,0.331)(10.041,0.3) (10.041,-0.3)(10.01,-0.331)(10,-0.333)(9.5,-0.4)(6,-1.5)(4,-0.4)(0.7,-2.9)(0.15,-4.7)};
  \draw[thick] plot[smooth] coordinates {(0.15,4.7)(0.7,2.9)(4,0.4)(6,1.5)(9.5,0.4)(10,0.333)(10.01,0.331)(10.041,0.3)};
  \draw[thick] plot[smooth] coordinates {(10.041,-0.3)(10.01,-0.331)(10,-0.333)(9.5,-0.4)(6,-1.5)(4,-0.4)(0.7,-2.9)(0.15,-4.7)};
  \draw[thick,fill=lightgray] (0,0) ellipse (0.4 and 5);
  \draw[thick] (0,0) ellipse (0.1 and 0.333);
  \draw[thick,fill=gray!25] (10.041,0) ellipse (0.1 and 0.333);
  \draw[thick] plot[smooth] coordinates {(0,2)(2,1.1)(4,-0.1)(6,-0.7)(9,0.25)(10,0.15)};
  \draw[thick,->] (-2,0)--(12,0) node[right,above]{$t$};
  \draw[thick,dashed](0,0.333)--(10,0.333);
  \draw[thick,dashed](0,-0.333)--(10,-0.333);
  \node [black] at (0,2) {\textbullet};
  \draw[->,thick](4,-3)node[right]{$e(t)$}--(2.5,-0.4);
  \draw[->,thick](3,3)node[right]{$(e(0),0)$}--(0.07,2.07);
  \draw[->,thick](9,3)node[right]{$\psi(t)$}--(7,1.4);
\end{tikzpicture}
\caption{Tracking-error evolution in the performance funnel~$\mathcal F_\psi$ with boundary~$\psi$.}
\label{Fig:funnel}
\end{figure}

The paper studies numerical integration of the already interconnected
closed-loop model.  Although \eqref{eq:FC} is model-free in its
continuous-time implementation, the Euler recursions analyzed below evaluate
$f$.  They are therefore schemes for simulation or model-based prediction of
the closed-loop dynamics, not direct digital implementations of the
model-free controller.  This scope is also different from sampled-data funnel
control as considered in~\cite{Lanza2024}. In a sampled-data design, sampled measurements generate a held input
for the continuous-time plant and the intersample trajectory must be
controlled.  
Here, by contrast, the
plant and controller are interconnected first and explicit or implicit Euler
is then applied to the resulting differential equation.  The conclusions are
therefore nodal statements and do not imply funnel invariance between grid
points.

The time-varying domain induced by $\mathcal F_\psi$ is the main numerical difficulty.  Classical
consistency and convergence estimates can be invoked only after one knows that
the numerical values stay in a region on which the vector field is defined
and uniformly Lipschitz.  Likewise, standard contractivity results for
implicit Euler under a one-sided Lipschitz condition do not by themselves
establish that the nonlinear update has a root inside the open funnel ball.
The analysis below supplies these missing admissibility estimates directly.
We restrict attention to the two first-order Euler schemes; standard
background on stability and contractivity of one-step methods can be found in
\cite{Hairer1996}.

\paragraph{Contributions}
The paper makes the following contributions.
\begin{enumerate}[label=(C\arabic*),leftmargin=*]
\item For explicit Euler, we compute an upper
      step-size threshold that guarantees a global nodewise funnel margin and prove first-order convergence on finite horizons and, under a~dissipativity condition, uniformly in time.
\item For implicit Euler, an admissible root exists for every step size and 
      every previous value. 
     Furthermore, sufficiently small steps make the admissible root
      unique and yield first-order convergence on finite horizons.
\item If the plant's one-sided Lipschitz expansion is dominated by the funnel 
      damping, the implicit-Euler root is unique for every step size, the
      scheme is contractive, and the first-order error estimate is uniform in
      time.  All error constants are expressed through bounds on the plant,
      reference, funnel, and initial gap.
\end{enumerate}

\paragraph{Organization}
Section~\ref{sec:sysclass} introduces the system class, proves the
continuous-time gap estimate, and collects the monotonicity and regularity
bounds used later.  Section~\ref{sec:timedisc} treats explicit and implicit
Euler.  In Section~\ref{sec:simulation} we conduct a~numerical study.  Section~\ref{sec:conclusion} summarizes the results
and their limitations.

\paragraph{Notation} All norms $\|\cdot\|$ and inner products $\langle\cdot,\cdot\rangle$ in $\R^m$ are Euclidean.  For $R>0$, let $B_R(0)$ and $\overline B_R(0)$ denote the open and closed balls in
$\mathbb R^m$, respectively. For an interval $I\subseteq\mathbb R$, $L^\infty(I,\mathbb R^n)$ is the space of (equivalence classes of) essentially bounded measurable functions $I\to\mathbb R^n$, equipped with the essential-supremum norm $\|\varphi\|_{L^\infty(I)}:=\operatorname*{ess\,sup}_{t\in I}\|\varphi(t)\|$, which we abbreviate as $\|\cdot\|_\infty$.     $W^{1,\infty}(I,\mathbb R^n)$
denotes the space of functions in $L^\infty(I,\mathbb R^n)$ whose
distributional derivative also lies in $L^\infty(I,\mathbb R^n)$.  Every
$\varphi\in W^{1,\infty}(I,\mathbb R^n)$ has a (unique) Lipschitz
representative, which is the one we use throughout.

\section{System class and continuous-time funnel bound}\label{sec:sysclass}

Throughout the paper,
\[
    y_{\rm ref}\in W^{1,\infty}(\mathbb R_{\geq0},\mathbb R^m)
\]
and
\begin{equation*}
    \psi\in\Psi
    :=\left\{\varphi\in W^{1,\infty}(\mathbb R_{\geq0},\mathbb R)
      \;\middle|\;
      \operatorname*{ess\,inf}_{t\geq0}\varphi(t)>0\right\}.
\end{equation*}
We use the Lipschitz representatives of $y_{\rm ref}$ and $\psi$ throughout.
In particular, their values at every grid point are unambiguous and
$\psi(t)>0$ for all $t\geq0$.  

We consider system~\eqref{eq:system}. With $e(t):=y(t)-y_{\rm ref}(t)$, the feedback law \eqref{eq:FC} applied to~\eqref{eq:system} yields the
closed-loop system
\begin{equation}\label{eq:closed-loop-y}
    \dot y(t)=f(y(t))-
    \frac{g\bigl(y(t)-y_{\rm ref}(t)\bigr)}
         {\psi(t)-\|y(t)-y_{\rm ref}(t)\|}.
\end{equation}
Equivalently, the error satisfies, for almost every $t\geq0$,
\begin{equation}\label{eq:closed-loop-e}
    \dot e(t)=F(e(t),t)-\dot y_{\rm ref}(t),
\end{equation}
where
\begin{equation*}
\begin{aligned}
    &F: \mathcal F_\psi\to\R,\ (e,t)\mapsto f\bigl(y_{\rm ref}(t)+e\bigr)-\gamma(e,t),\\
    &\gamma: \mathcal F_\psi\to\R,\ (e,t)\mapsto\frac{g e}{\psi(t)-\|e\|}.
\end{aligned}
\end{equation*}

The following constants will be used throughout:
\begin{equation}\label{eq:data-constants}
\begin{aligned}
    \psi_-&:=\inf_{t\geq0}\psi(t)>0,
    &\psi_+&:=\|\psi\|_{\infty},\\
    Y_0&:=\|y_{\rm ref}\|_{\infty},
    &Y_1&:=\|\dot y_{\rm ref}\|_{\infty},\\
    \Psi_1&:=\|\dot\psi\|_{\infty},
    &K&:=\overline B_{Y_0+\psi_+}(0),\\
    M_f&:=\max_{x\in K}\|f(x)\|,
    &A&:=M_f+Y_1+\Psi_1.
\end{aligned}
\end{equation}
The maximum defining $M_f$ is finite because $K$ is compact and $f$ is
continuous.  For an initial error $e_0$ with
$(e_0,0)\in\mathcal F_\psi$, define
\begin{equation*}
\begin{aligned}
    d_0&:=\psi(0)-\|e_0\|>0,\\
    \eta_*&:=\frac{g\psi_-}{A+g},\\
    \eta_0&:=\min\{d_0,\eta_*\}>0.
\end{aligned}
\end{equation*}
Since $A\geq0$, one has $0<\eta_*\leq\psi_-$.

The following result can in principle be inferred from the available literature, in particular~\cite{Ilchmann2002} and~\cite[Prop.~3.2]{Berger2018a}. In order to be self-contained, we provide a brief proof here.

\begin{theorem}[Continuous-time well-posedness and quantitative funnel margin]
\label{thm:continuous-margin}
For every initial value $y(0)=y_0$ satisfying
$(e_0,0)\in\mathcal F_\psi$ for $e_0:=y_0-y_{\rm ref}(0)$, the closed-loop
system \eqref{eq:closed-loop-y} has a unique global solution.  Its tracking
error satisfies
\begin{equation}\label{eq:continuous-margin}
    \psi(t)-\|e(t)\|\geq\eta_0
    \qquad \forall\, t\geq0.
\end{equation}
In particular, the control input is bounded by
\begin{equation*}
    \|u(t)\|\leq\frac{\psi_+}{\eta_0}
    \qquad \forall\, t\geq0.
\end{equation*}
\end{theorem}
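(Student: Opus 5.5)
The proof follows the standard funnel-control pattern: local existence on the open funnel, an a priori gap estimate, and then continuation to all of $\mathbb R_{\geq0}$.

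\emph{Local existence.} Consider the open set
\[
\mathcal D:=\{(t,y)\in\mathbb R_{\geq0}\times\mathbb R^m \mid \|y-y_{\rm ref}(t)\|<\psi(t)\}.
\]
On $\mathcal D$ the right-hand side of \eqref{eq:closed-loop-y} is continuous in $t$, since $y_{\rm ref}$ and $\psi$ are Lipschitz. It is also locally Lipschitz in $y$, uniformly on compact subsets, because $f$ is locally Lipschitz and the denominator $\psi(t)-\|y-y_{\rm ref}(t)\|$ is bounded away from zero on compact subsets of $\mathcal D$. By Picard--Lindelöf there is a unique maximal solution $y:[0,\omega)\to\mathbb R^m$ with graph in $\mathcal D$. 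If $\omega<\infty$, the graph leaves every compact subset of $\mathcal D$.

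\emph{Gap estimate.} Let $d(t):=\psi(t)-\|e(t)\|>0$ on $[0,\omega)$. Along the solution $\|e(t)\|<\psi(t)\leq\psi_+$, so $y(t)=y_{\rm ref}(t)+e(t)\in K$ and hence $\|f(y(t))\|\leq M_f$. The function $d$ is absolutely continuous. Wherever $e(t)\neq0$ and the derivatives exist, \eqref{eq:closed-loop-e} gives
\[
\dot d=\dot\psi-\Bigl\langle \tfrac{e}{\|e\|},\,f(y)-\dot y_{\rm ref}\Bigr\rangle+\frac{g\|e\|}{d}
\;\geq\; -A+\frac{g(\psi-d)}{d}.
\]
Now suppose $d(t)<\eta_*$. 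Then $\psi-d>\psi_--\eta_*$, and therefore
\[
\dot d> -A+g\,\frac{\psi_--\eta_*}{\eta_*}=-A+\frac{g\psi_-}{\eta_*}-g=0,
\]
which is exactly the choice $\eta_*=g\psi_-/(A+g)$. If $e(t)=0$, then $d(t)=\psi(t)\geq\psi_-\geq\eta_*$, so this case cannot arise when $d<\eta_*$.

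We then argue by contradiction. Suppose $d(t_1)<\eta_0$ for some $t_1$, and let $t_0:=\sup\{t<t_1 \mid d(t)\geq\eta_0\}$. This set is nonempty because $d(0)=d_0\geq\eta_0$, and $d(t_0)=\eta_0$ by continuity. On $(t_0,t_1]$ we have $d<\eta_0\leq\eta_*$, so $e\neq0$ there and $\dot d>0$ almost everywhere. Integrating gives $d(t_1)>d(t_0)=\eta_0$, a contradiction. This establishes \eqref{eq:continuous-margin} on $[0,\omega)$.

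\emph{Globality and input bound.} Suppose $\omega<\infty$. The graph of $y$ on $[0,\omega]$ lies in the set
\[
\{(t,y) \mid t\in[0,\omega],\ \|y-y_{\rm ref}(t)\|\leq\psi(t)-\eta_0\},
\]
which is a compact subset of $\mathcal D$. This contradicts maximality, so $\omega=\infty$. Finally,
\[
\|u(t)\|=\frac{\|e(t)\|}{d(t)}\leq\frac{\psi_+}{\eta_0}\qquad\forall\,t\geq0.
\]

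The main obstacle is the derivative of $d$: $\|e\|$ is not differentiable at $e=0$, and $\psi$ is only Lipschitz. Both are handled by working only on the interval $(t_0,t_1]$, where $e\neq0$, so $\|e\|$ is absolutely continuous with the stated derivative almost everywhere, and then using the fundamental theorem of calculus for absolutely continuous functions.
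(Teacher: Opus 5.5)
Your proof is correct and follows the paper's argument essentially step for step: a unique maximal solution on the (relatively open) funnel domain, the gap inequality $\dot d\geq -A+g(\psi_--d)/d$, which is positive for $0<d<\eta_*$, a barrier contradiction on $(t_0,t_1]$, and compactness of $\{(t,y)\mid t\in[0,\omega],\ \|y-y_{\rm ref}(t)\|\leq\psi(t)-\eta_0\}$ to force $\omega=\infty$. Your explicit check that $e(t)\neq0$ whenever $d(t)<\eta_*$, so that $\|e\|$ is differentiable there, is slightly more careful than the paper, which only records $\|e(t)\|\geq\psi_--d(t)\geq0$.
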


\begin{proof}
The right-hand side of \eqref{eq:closed-loop-y} is continuous in $t$ and
locally Lipschitz in $y$ on the relatively open set
\[
    \Omega:=\{(t,y)\mid (y-y_{\rm ref}(t),t) \in \mathcal F_\psi\}.
\]
Hence, by the standard Carath\'{e}odory local
existence and uniqueness theory \cite[\S10]{Walter1998}, there exists a unique maximal solution $y:[0,\omega)\to\R^m$, $\omega\in (0,\infty]$, of~\eqref{eq:closed-loop-y} and the closure of the graph of~$y$ is not a compact subset of $\Omega$.

Set $d(t):=\psi(t)-\|e(t)\|$ for $t\in [0,\omega)$.  The function $d$ is absolutely
continuous.  Whenever $d(t)<\eta_*$, one has
$\|e(t)\|=\psi(t)-d(t)\geq\psi_--d(t)\ge 0$.  Thus, for almost every such $t\in[0,\omega)$, the
chain rule and \eqref{eq:closed-loop-e} yield
\begin{align*}
    \dot d(t)
    &=\dot\psi(t)-\left\langle\frac{e(t)}{\|e(t)\|},\dot e(t)\right\rangle\\
    &\geq-\Psi_1-M_f-Y_1+g\frac{\|e(t)\|}{d(t)}\\
    &\geq-A+g\frac{\psi_--d(t)}{d(t)}.
\end{align*}
The last expression is strictly positive for $0<d(t)<\eta_*$.  To make the barrier argument explicit, suppose that
$d(t_1)<\eta_0$ for some $t_1<\omega$.  By continuity and $d(0)=d_0\ge \eta_0$, there is 
$t_0\in[0,t_1)$ such that $d(t_0)=\eta_0$ and $d(t)<\eta_0\leq\eta_*$ for
$t\in(t_0,t_1]$.  Hence $\dot d(t)>0$ for almost every $t\in(t_0,t_1)$, and absolute
continuity gives 
\[
  0 >  d(t_1)-d(t_0)=\int_{t_0}^{t_1}\dot d(t)\,\mathrm dt>0, 
\]
a contradiction.
Thus $d(t)\geq\eta_0$ for all $t\in[0,\omega)$.

Consequently, $\|y(t) - y_{\rm ref}(t)\| = \psi(t) - d(t) \le  \psi(t) - \eta_0$ for all $t\in[0,\omega)$. 
If $\omega<\infty$, this implies that the graph of~$y$ is contained in a compact subset of~$\Omega$, a contradiction.
Therefore, $\omega=\infty$.  Finally,
$\|u(t)\|=\|e(t)\|/d(t)\leq\psi_+/\eta_0$ for all $t\ge 0$.
\end{proof}


We next collect two properties needed for the error analysis.  First, the
funnel term is strongly monotone without any differentiability argument.

\begin{lemma}[Strong monotonicity of the funnel term]
\label{lem:gamma-monotone}
For all $(e,t), (z,t)\in \mathcal F_\psi$ we have 
\begin{equation*}
    \langle e-z,\gamma(e,t)-\gamma(z,t)\rangle
    \geq \frac{g}{\psi(t)}\|e-z\|^2
    \geq \frac{g}{\psi_+}\|e-z\|^2.
\end{equation*}
\end{lemma}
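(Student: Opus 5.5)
Fix $t$ and write $\psi:=\psi(t)$, $a:=\psi-\|e\|>0$, $b:=\psi-\|z\|>0$. Then $\gamma(e,t)=g\,e/a$ and $\gamma(z,t)=g\,z/b$. We want to show
\[
\Bigl\langle e-z,\tfrac{e}{a}-\tfrac{z}{b}\Bigr\rangle\ \ge\ \tfrac1\psi\|e-z\|^2 .
\]
The second inequality in the lemma then follows immediately from $\psi(t)\le\psi_+$. The plan is to split $\gamma$ into a linear part plus a monotone remainder, which avoids any differentiability argument.

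Write $\tfrac1a=\tfrac1\psi+\phi(\|e\|)$ with
\[
\phi(r):=\frac{1}{\psi-r}-\frac1\psi=\frac{r}{\psi(\psi-r)},
\]
a nonnegative, increasing function on $[0,\psi)$. Then $e/a=e/\psi+h(e)$ with $h(e):=\phi(\|e\|)e$, and the left-hand side equals
\[
\tfrac1\psi\|e-z\|^2+\langle e-z,h(e)-h(z)\rangle .
\]
It therefore suffices to show that $h$ is monotone, i.e.\ $\langle e-z,h(e)-h(z)\rangle\ge0$.

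To prove this, set $p:=\phi(\|e\|)$ and $q:=\phi(\|z\|)$, both nonnegative. Expanding gives
\[
\langle e-z,pe-qz\rangle=p\|e\|^2+q\|z\|^2-(p+q)\langle e,z\rangle .
\]
By Cauchy–Schwarz, $\langle e,z\rangle\le\|e\|\|z\|$, so the expression is at least
\[
p\|e\|^2+q\|z\|^2-(p+q)\|e\|\|z\|=(\|e\|-\|z\|)(p\|e\|-q\|z\|).
\]
This is nonnegative because $r\mapsto r\phi(r)$ is increasing on $[0,\psi)$, so both factors have the same sign.

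The only step needing care is this scalar monotonicity reduction. Everything rests on $\phi\ge0$ (needed for Cauchy–Schwarz to be applied in the right direction) and on monotonicity of $r\phi(r)=r^2/(\psi(\psi-r))$, which is evident since the numerator increases and the denominator decreases in $r$. Multiplying the resulting inequality by $g>0$ gives the claim.
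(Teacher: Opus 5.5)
Your proof is correct, but it uses a different decomposition from the paper's. The paper does not split off the linear part. It works with the full coefficient $a(\rho):=(\psi(t)-\rho)^{-1}$ (the reciprocal of your $a$) and uses the exact identity
\[
\bigl\langle e-z,\,a(\|e\|)e-a(\|z\|)z\bigr\rangle=\tfrac{a(\|e\|)+a(\|z\|)}{2}\,\|e-z\|^2+\tfrac{a(\|e\|)-a(\|z\|)}{2}\,\bigl(\|e\|^2-\|z\|^2\bigr).
\]
The second term is nonnegative because $a$ is increasing, and the first is bounded below via $a\ge 1/\psi(t)$. No Cauchy--Schwarz step is needed, and the identity gives for free the slightly stronger intermediate bound $\tfrac12\bigl(a(\|e\|)+a(\|z\|)\bigr)\|e-z\|^2$. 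Your route, by contrast, discards the nonnegative term $(p+q)\bigl(\|e\|\|z\|-\langle e,z\rangle\bigr)$ and lands exactly on $\tfrac1{\psi}\|e-z\|^2$.

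What your route buys is structural. After removing the linear part, you reduce everything to the standard monotonicity criterion for radial maps $\xi\mapsto\phi(\|\xi\|)\xi$: $\phi\ge0$ and $r\mapsto r\phi(r)$ nondecreasing. This criterion is weaker than requiring $\phi$ (or $a$) itself to be increasing; for example, $\phi(r)=1/(1+r)$ satisfies it. So your argument extends to radial gains for which the paper's same-sign argument is not directly available. For the specific gain $g/(\psi(t)-\|e\|)$, both proofs are equally short.
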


\begin{proof} Write $a(\rho):=(\psi(t)-\rho)^{-1}$ for $\rho\in[0,\psi(t))$. Then
\begin{align*}
&~~~~\langle e-z,a(\|e\|)e-a(\|z\|)z\rangle
\\&=a(\|e\|)\|e\|^2+a(\|z\|)\|z\|^2
-(a(\|e\|)+a(\|z\|))\langle e,z\rangle\\
&=\frac{a(\|e\|)+a(\|z\|)}{2}
\bigl(\|e\|^2+\|z\|^2-2\langle e,z\rangle\bigr)\\
&\quad+\frac{a(\|e\|)-a(\|z\|)}{2}(\|e\|^2-\|z\|^2)\\
&=\frac{a(\|e\|)+a(\|z\|)}{2}\|e-z\|^2
+\frac{a(\|e\|)-a(\|z\|)}{2}(\|e\|^2-\|z\|^2).
\end{align*}
Since $a$ is increasing on $[0,\psi(t))$, and since $\rho\mapsto\rho^2$ is increasing on $[0,\infty)$, the two factors $a(\|e\|)-a(\|z\|)$ and
$\|e\|^2-\|z\|^2$ always have the same sign, so that
$$\bigl(a(\|e\|)-a(\|z\|)\bigr)\bigl(\|e\|^2-\|z\|^2\bigr)\geq0.$$
Furthermore, $0\leq\|e\|,\|z\|<\psi(t)$ implies
$a(\|e\|),a(\|z\|)\geq1/\psi(t)$.
Consequently,
$$
 \langle e-z,a(\|e\|)e-a(\|z\|)z\rangle
 \geq \frac1{\psi(t)}\|e-z\|^2.
$$
Recalling that $\gamma(e,t)=g\,a(\|e\|)e$, we obtain
$$
 \langle e-z,\gamma(e,t)-\gamma(z,t)\rangle
 \geq\frac{g}{\psi(t)}\|e-z\|^2.
$$
Finally, $\psi(t)\leq\psi_+$ yields
$$
 \frac{g}{\psi(t)}\|e-z\|^2
 \geq\frac{g}{\psi_+}\|e-z\|^2,
$$
which proves the claim.
\end{proof}

\begin{assumption}[One-sided Lipschitz condition]\label{ass:osl}
For $f$ in~\eqref{eq:system} and $K$ as in~\eqref{eq:data-constants} there exists $\ell_f\in\mathbb R$ such that
\begin{equation}\label{eq:osl}
    \langle x-z,f(x)-f(z)\rangle
    \leq\ell_f\|x-z\|^2
    \qquad \forall x,z\in K,
\end{equation}
and
\begin{equation}\label{eq:alpha}
    \alpha:=\frac{g}{\psi_+}-\ell_f>0.
\end{equation}
\end{assumption}

Conditions \eqref{eq:osl} and \eqref{eq:alpha} are compatible, not
contradictory: \eqref{eq:osl} merely asks for \emph{some} $\ell_f$
bounding the one-sided expansion of $f$ on $K$, which always exists since $f$ is locally Lipschitz (e.g.\ $\ell_f=L_f$ from \eqref{eq:Lf-bound}), while \eqref{eq:alpha} is the basic requirement that this $\ell_f$
lie strictly below $g/\psi_+$.  Since $\psi_+$ enters \eqref{eq:alpha} only on the right-hand side, widening the funnel lowers the required threshold and relaxes \eqref{eq:alpha} monotonically,
without affecting the existence of $\ell_f$ in \eqref{eq:osl}.

\begin{lemma}[Dissipativity of the closed-loop field]
\label{lem:F-dissipative}
Under Assumption~\ref{ass:osl}, for all $(e,t), (z,t)\in \mathcal F_\psi$ we have
\begin{equation}\label{eq:F-dissipative}
    \langle e-z,F(e,t)-F(z,t)\rangle
    \leq-\alpha\|e-z\|^2.
\end{equation}
\end{lemma}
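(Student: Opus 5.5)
The plan is to split $F$ into its plant part and its funnel part and to estimate the two inner products separately. By definition, $F(e,t)-F(z,t)=\bigl(f(y_{\rm ref}(t)+e)-f(y_{\rm ref}(t)+z)\bigr)-\bigl(\gamma(e,t)-\gamma(z,t)\bigr)$. Hence
\[
\langle e-z,F(e,t)-F(z,t)\rangle=\langle e-z,f(x)-f(x')\rangle-\langle e-z,\gamma(e,t)-\gamma(z,t)\rangle,
\]
where we set $x:=y_{\rm ref}(t)+e$ and $x':=y_{\rm ref}(t)+z$. The key observation is that $x-x'=e-z$.

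For the first term, we apply the one-sided Lipschitz bound \eqref{eq:osl}. This requires checking that $x,x'\in K$. Since $(e,t)\in\mathcal F_\psi$, we have $\|x\|\le\|y_{\rm ref}(t)\|+\|e\|<Y_0+\psi(t)\le Y_0+\psi_+$, and the same holds for $x'$. Therefore $x,x'\in K=\overline B_{Y_0+\psi_+}(0)$, and \eqref{eq:osl} gives $\langle e-z,f(x)-f(x')\rangle\le\ell_f\|e-z\|^2$. This membership check is the only point that needs care: it uses the Lipschitz representative, so that $\|y_{\rm ref}(t)\|\le Y_0$ holds pointwise for every $t$ and not only almost everywhere. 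This is exactly why $K$ was chosen with radius $Y_0+\psi_+$.

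For the second term, Lemma~\ref{lem:gamma-monotone} directly yields $\langle e-z,\gamma(e,t)-\gamma(z,t)\rangle\ge\frac{g}{\psi_+}\|e-z\|^2$. Combining the two estimates gives
\[
\langle e-z,F(e,t)-F(z,t)\rangle\le\Bigl(\ell_f-\frac{g}{\psi_+}\Bigr)\|e-z\|^2=-\alpha\|e-z\|^2,
\]
with $\alpha$ as in \eqref{eq:alpha}, which is \eqref{eq:F-dissipative}. No real obstacle is expected beyond the membership check $x,x'\in K$.
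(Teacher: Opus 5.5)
Your proposal is correct and follows the same route as the paper: the reference shift cancels, \eqref{eq:osl} bounds the plant part, and Lemma~\ref{lem:gamma-monotone} bounds the funnel part. Your explicit check that $y_{\rm ref}(t)+e,\,y_{\rm ref}(t)+z\in K$ is a detail the paper leaves implicit, and you handle it correctly.
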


\begin{proof}
The reference shift cancels in the difference.  Combining~\eqref{eq:osl} with Lemma~\ref{lem:gamma-monotone} gives
\begin{align*}
 \langle e-z,F(e,t)-F(z,t)\rangle\leq\left(\ell_f-\frac{g}{\psi_+}\right)\|e-z\|^2
 =-\alpha\|e-z\|^2.
\end{align*}
\end{proof}

\begin{remark}[Interpretation of Assumption~\ref{ass:osl}]
Since $f$ is locally Lipschitz and $K$ is compact, it is
Lipschitz on $K$, that is there exists $L_f>0$ such that
\begin{equation}\label{eq:Lf-bound}
    \forall\, x,y\in K:\ \|f(x)-f(y)\|\le L_f \|x-y\|.
\end{equation}
Hence, it satisfies \eqref{eq:osl} with $\ell_f=L_f$, but the strict dominance condition \eqref{eq:alpha} is an additional incremental stability requirement.  It states that the minimal radial damping $g/\psi_+$ generated by the funnel term dominates the one-sided expansion of the plant.  If $f$ is strictly dissipative with
constant $\beta>0$, then one may take $\ell_f=-\beta$ and
$\alpha=\beta+g/\psi_+$.  Strict dissipativity of the plant is therefore
sufficient but not necessary.
\end{remark}

We call $f$ \emph{strictly dissipative
with constant $\beta>0$} if \eqref{eq:osl} holds with $K=\mathbb R^m$ and
$\ell_f=-\beta$, i.e. $\langle x-z,f(x)-f(z)\rangle\leq-\beta\|x-z\|^2$ for
all $x,z\in\mathbb R^m$.


\begin{lemma}[Lipschitz bound away from the funnel boundary]
\label{lem:F-Lipschitz}
For $\eta>0$, define
\begin{equation}\label{eq:L-F-eta}
    L_F(\eta)
    :=L_f+g\left(\frac{1}{\eta}+\frac{\psi_+}{\eta^2}\right),
\end{equation}
where $L_f>0$ is as in~\eqref{eq:Lf-bound}.
For all $(e,t), (z,t)\in \mathcal F_\psi$ satisfying
$\psi(t)-\|e\|\ge \eta$ and $\psi(t)-\|z\|\geq\eta$ we have
\begin{equation}\label{eq:F-Lipschitz}
    \|F(e,t)-F(z,t)\|\leq L_F(\eta)\|e-z\|.
\end{equation}
\end{lemma}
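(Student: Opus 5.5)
The proof splits $F(e,t)-F(z,t)$ into a plant part and a funnel part and bounds each separately. For the plant part, note that $\|y_{\rm ref}(t)+e\|\le Y_0+\|e\|<Y_0+\psi(t)\le Y_0+\psi_+$, so both $y_{\rm ref}(t)+e$ and $y_{\rm ref}(t)+z$ lie in $K$. The Lipschitz bound \eqref{eq:Lf-bound} then gives
\[
\bigl\|f(y_{\rm ref}(t)+e)-f(y_{\rm ref}(t)+z)\bigr\|\le L_f\|e-z\|,
\]
since the reference shift cancels in the difference.

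For the funnel part, write $a(\rho)=(\psi(t)-\rho)^{-1}$ as in the proof of Lemma~\ref{lem:gamma-monotone}, so that $\gamma(e,t)=g\,a(\|e\|)e$. Add and subtract $a(\|e\|)z$:
\[
\gamma(e,t)-\gamma(z,t)=g\,a(\|e\|)(e-z)+g\bigl(a(\|e\|)-a(\|z\|)\bigr)z .
\]
The first term is controlled by the hypothesis $\psi(t)-\|e\|\ge\eta$, which gives $a(\|e\|)\le 1/\eta$ and hence a contribution of at most $(g/\eta)\|e-z\|$.

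For the second term we compute
\[
a(\|e\|)-a(\|z\|)=\frac{\|e\|-\|z\|}{(\psi(t)-\|e\|)(\psi(t)-\|z\|)}.
\]
Using the reverse triangle inequality $\bigl|\|e\|-\|z\|\bigr|\le\|e-z\|$ and the fact that both denominators are at least $\eta$, we get
\[
\bigl|a(\|e\|)-a(\|z\|)\bigr|\le \frac{\|e-z\|}{\eta^2}.
\]
Since $\|z\|<\psi(t)\le\psi_+$, the second term is bounded by $g\,\psi_+\|e-z\|/\eta^2$.

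Summing the three contributions yields \eqref{eq:F-Lipschitz} with exactly the constant $L_F(\eta)$ from \eqref{eq:L-F-eta}. None of the steps is a real obstacle; the only point needing care is choosing the add-and-subtract split. With this split the factor multiplying $z$ is bounded by $\psi_+$ directly, which is what produces the $\psi_+/\eta^2$ term; no differentiability of $\gamma$ is required.
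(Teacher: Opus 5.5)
Your proof is correct and follows essentially the same route as the paper, which uses the identical split $\frac{e-z}{d_e}+z\bigl(\frac{1}{d_e}-\frac{1}{d_z}\bigr)$ with $d_e=\psi(t)-\|e\|$, $d_z=\psi(t)-\|z\|$, bounds it via $|d_e-d_z|\le\|e-z\|$ and $\|z\|\le\psi_+$, and then adds the $L_f$ term. Your explicit check that $y_{\rm ref}(t)+e$ and $y_{\rm ref}(t)+z$ lie in $K$ is a detail the paper leaves implicit.
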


\begin{proof}
Set $d_e:=\psi(t)-\|e\|$ and $d_z:=\psi(t)-\|z\|$.  Since
$|d_e-d_z|\leq\|e-z\|$ and $\|z\|\leq\psi_+$,
\begin{align*}
 \left\|\frac{e}{d_e}-\frac{z}{d_z}\right\| = \left\|\frac{e-z}{d_e}+\left(\frac{z}{d_e} -\frac{z}{d_z}\right)\right\| 
 &\leq\frac{\|e-z\|}{d_e}
      +\|z\|\frac{|d_e-d_z|}{d_ed_z}\\
 &\leq\left(\frac{1}{\eta}
      +\frac{\psi_+}{\eta^2}\right)\|e-z\|.
\end{align*}
Combining this estimate with~\eqref{eq:Lf-bound} proves~\eqref{eq:F-Lipschitz}.
\end{proof}

Define, using~\eqref{eq:data-constants},
\begin{equation*}
\begin{aligned}
    M_y&:=M_f+\frac{g\psi_+}{\eta_0},\\
    M_e&:=M_y+Y_1,\\
    L_q&:=L_fM_y
       +g\left(\frac{M_e}{\eta_0}
       +\frac{\psi_+(\Psi_1+M_e)}{\eta_0^2}\right).
\end{aligned}
\end{equation*}
These constants are finite and depend only on the problem data and the
initial gap.

\begin{lemma}[Lipschitz regularity along the exact solution]
\label{lem:q-Lipschitz}
Let $y_0\in\R^m$ be such that $(y_0-y_{\rm ref}(0),0)\in\mathcal F_\psi$ and let $y$ be the unique global solution of~\eqref{eq:closed-loop-y} with $y(0)=y_0$ according to Theorem~\ref{thm:continuous-margin}. Set $e(t):=y(t)-y_{\rm ref}(t)$ and
$q(t):=F(e(t),t)$.  Then
\begin{equation}\label{eq:q-Lipschitz}
    \|q(t)-q(s)\|\leq L_q|t-s|
    \qquad \forall s,t\geq0.
\end{equation}
Furthermore, for a time step $h>0$, we have the following estimates for all $t\ge 0$
\begin{align}
    \left\|\int_{t}^{t+h}[q(s)-q(t)]\,\mathrm ds\,\right\|
    &\leq\frac{L_q}{2}h^2,\label{eq:forward-defect}\\
    \left\|\int_{t}^{t+h}[q(s)-q(t+h)]\,\mathrm ds\,\right\|
    &\leq\frac{L_q}{2}h^2.\label{eq:backward-defect}
\end{align}
\end{lemma}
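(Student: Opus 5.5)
We decompose $q(t)=f(y(t))-\gamma(e(t),t)$ and bound the Lipschitz constant of each piece separately, using the margin $d(t):=\psi(t)-\|e(t)\|\ge\eta_0$ from Theorem~\ref{thm:continuous-margin}. The two defect estimates then follow by integrating the Lipschitz bound.

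\emph{Regularity of $y$ and $e$.} By Theorem~\ref{thm:continuous-margin}, $\|e(t)\|\le\psi(t)-\eta_0\le\psi_+$, so $\|y(t)\|\le Y_0+\psi_+$, i.e.\ $y(t)\in K$ for all $t\ge0$. Hence, for almost every $t$,
\[
\|\dot y(t)\|\le \|f(y(t))\|+\frac{g\|e(t)\|}{d(t)}\le M_f+\frac{g\psi_+}{\eta_0}=M_y .
\]
Since $y$ is absolutely continuous, it is Lipschitz with constant $M_y$. Because $e=y-y_{\rm ref}$, it follows that $\|e(t)-e(s)\|\le M_e|t-s|$. Moreover
\[
|d(t)-d(s)|\le|\psi(t)-\psi(s)|+\bigl|\|e(t)\|-\|e(s)\|\bigr|\le(\Psi_1+M_e)|t-s|.
\]

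\emph{Lipschitz bound for $q$.} The plant part satisfies $\|f(y(t))-f(y(s))\|\le L_f M_y|t-s|$ by \eqref{eq:Lf-bound}, since both points lie in $K$. For the funnel part we use the same splitting as in Lemma~\ref{lem:F-Lipschitz}, now with time-dependent denominators:
\[
\left\|\frac{e(t)}{d(t)}-\frac{e(s)}{d(s)}\right\|\le\frac{\|e(t)-e(s)\|}{d(t)}+\|e(s)\|\frac{|d(t)-d(s)|}{d(t)d(s)}.
\]
The right-hand side is at most
\[
\left(\frac{M_e}{\eta_0}+\frac{\psi_+(\Psi_1+M_e)}{\eta_0^2}\right)|t-s|.
\]
Multiplying by $g$ and adding the plant part gives exactly $L_q$, which proves \eqref{eq:q-Lipschitz}. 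The only point requiring care is to use the exact-solution margin $\eta_0$ at both times $t$ and $s$; unlike Lemma~\ref{lem:F-Lipschitz}, the time variation of $\psi$ also enters, through $\Psi_1$. Since all of this was prepared in the preceding results, I expect no real obstacle here.

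\emph{Defect estimates.} By \eqref{eq:q-Lipschitz},
\[
\left\|\int_t^{t+h}[q(s)-q(t)]\,\mathrm ds\right\|\le\int_t^{t+h}L_q(s-t)\,\mathrm ds=\frac{L_q}{2}h^2,
\]
which is \eqref{eq:forward-defect}. The analogous computation with $(t+h-s)$ in place of $(s-t)$ gives \eqref{eq:backward-defect}. The function $q$ is continuous (indeed Lipschitz), so these integrals are well defined.
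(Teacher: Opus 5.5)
Your proposal is correct and follows essentially the same route as the paper: you bound $\|\dot y\|\le M_y$ using $y(t)\in K$ and the margin $\eta_0$, derive Lipschitz bounds for $e$ and $d$, split $e/d$ exactly as the paper does, and integrate the resulting Lipschitz bound for $q$ to obtain both defect estimates. Your explicit check that $\|y(t)\|\le Y_0+\psi_+$, so that $y(t)\in K$, spells out a step the paper only asserts.
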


\begin{proof} 
From~\eqref{eq:system} we find that
\[
    \|\dot y(t)\| \le \|f(y(t))\| + g \|u(t)\|
\]
and since $y(t)\in K$ for all $t\ge 0$ and $\|u(t)\|\le \psi_+/\eta_0$ by
Theorem~\ref{thm:continuous-margin}, it follows that
$\|\dot y(t)\|\leq M_y$ and hence $\|\dot e(t)\|\leq M_e$  for almost all $t\ge 0$.  Hence
\[
    \|y(t)-y(s)\|\leq M_y|t-s|,
    \qquad
    \|e(t)-e(s)\|\leq M_e|t-s|.
\]
With $d(t):=\psi(t)-\|e(t)\|\geq\eta_0$ by~\eqref{eq:continuous-margin}, one also has
\[
    |d(t)-d(s)|\leq(\Psi_1+M_e)|t-s|.
\]
Therefore,
\begin{align*}
\left\|\frac{e(t)}{d(t)}-\frac{e(s)}{d(s)}\right\|
&\leq \left\|\frac{e(t)-e(s)}{d(t)}+e(s)\left(\frac{1}{d(t)}-\frac{1}{d(s)}\right)\right\|\\
&\leq
\left(\frac{M_e}{\eta_0}
 +\frac{\psi_+(\Psi_1+M_e)}{\eta_0^2}\right)|t-s|.
\end{align*}
Together with $\|f(y(t))-f(y(s))\|\leq L_fM_y|t-s|$ by~\eqref{eq:Lf-bound},
this proves \eqref{eq:q-Lipschitz}.  Integrating over
$[t,t+h]$ yields \eqref{eq:forward-defect} and
\eqref{eq:backward-defect}.
\end{proof}

\section{Euler time discretization}\label{sec:timedisc}
The explicit and implicit Euler schemes are obtained by discretizing the closed-loop system~\eqref{eq:closed-loop-y} and then defining
$e_k:=y_k-y_{\rm ref}(t_k)$, where $t_k:=kh$ for $k\in\N_0$ and $y_0\in\R^m$ is such that $(e_0,0)\in\mathcal F_\psi$. Introduce the exact reference
increment
\begin{equation*}
    \Delta y_{{\rm ref},k}
    :=y_{\rm ref}(t_{k+1})-y_{\rm ref}(t_k),
\end{equation*}
with which we avoid noncanonical pointwise values of the weak derivative
$\dot y_{\rm ref}$.

\subsection{Explicit Euler}\label{subsec:explicit}
The explicit Euler scheme is given by
\begin{equation}\label{eq:explicit-Euler}
    e_{k+1}=e_k+hF(e_k,t_k)-\Delta y_{{\rm ref},k}.
\end{equation}
The first result is a direct, global-in-time nodewise funnel estimate.  It does
not rely on convergence to the exact solution.

\begin{theorem}[Global nodewise funnel preservation for explicit Euler]
\label{thm:explicit-margin}
Fix any desired discrete margin
\begin{equation}\label{eq:eta-E-choice}
    0<\eta_{\rm E}<\eta_0
\end{equation}
and define, using~\eqref{eq:data-constants},
\begin{equation}\label{eq:h-E}
    \overline h_{\rm E}:=
    \begin{cases}
    \displaystyle
    \min\left\{\frac{\eta_{\rm E}}{g},
    \frac{\eta_*-\eta_{\rm E}}{A}\right\},&A>0,\\[2ex]
    \displaystyle\frac{\eta_{\rm E}}{g},&A=0.
    \end{cases}
\end{equation}
Then $\overline h_{\rm E}>0$, and for every fixed step size
$0<h\leq\overline h_{\rm E}$ the recursion \eqref{eq:explicit-Euler} is
well defined for all $k\in\mathbb N_0$ and satisfies
\begin{equation}\label{eq:explicit-margin}
    \psi(t_k)-\|e_k\|\geq\eta_{\rm E}
    \qquad \forall\, k\in\mathbb N_0.
\end{equation}
\end{theorem}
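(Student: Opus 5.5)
The plan is a one-step induction on the nodal gap $d_k:=\psi(t_k)-\|e_k\|$, reproducing the barrier argument of Theorem~\ref{thm:continuous-margin} in discrete form. First, $\overline h_{\rm E}>0$ follows from \eqref{eq:eta-E-choice} together with $\eta_0\le\eta_*$, which gives $\eta_*-\eta_{\rm E}>0$. The base case is $d_0\ge\eta_0>\eta_{\rm E}$. For the inductive step, assume $d_k\ge\eta_{\rm E}$. Then $(e_k,t_k)\in\mathcal F_\psi$, so $F(e_k,t_k)$ is defined and $e_{k+1}$ exists; this also yields well-definedness of the recursion for all $k$. Moreover $\|y_k\|\le\|y_{\rm ref}(t_k)\|+\|e_k\|\le Y_0+\psi_+$, so $y_k\in K$ and $\|f(y_k)\|\le M_f$.

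The key step is to rewrite the update as
\[
  e_{k+1}=\Bigl(1-\frac{hg}{d_k}\Bigr)e_k+hf(y_k)-\Delta y_{{\rm ref},k}.
\]
The coefficient $c_k:=1-hg/d_k$ lies in $[0,1)$ because $h\le\eta_{\rm E}/g\le d_k/g$. This is exactly where the first bound in \eqref{eq:h-E} enters: it rules out overshoot through the origin, so that $\|c_ke_k\|=c_k\|e_k\|$ with no sign issue. Using $\|\Delta y_{{\rm ref},k}\|\le hY_1$ and $\psi(t_{k+1})\ge\psi(t_k)-h\Psi_1$ (both from the Lipschitz representatives), we get
\[
  d_{k+1}\ge \psi(t_k)-h\Psi_1-c_k\|e_k\|-hM_f-hY_1
  = d_k+\frac{hg}{d_k}\|e_k\|-hA .
\]
Substituting $\|e_k\|=\psi(t_k)-d_k$ gives
\[
  d_{k+1}\ge d_k+h\Bigl(g\,\frac{\psi(t_k)-d_k}{d_k}-A\Bigr).
\]

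Next we split into two cases. If $d_k\ge\eta_*$, we drop the nonnegative term $hg\|e_k\|/d_k$ and obtain $d_{k+1}\ge\eta_*-hA\ge\eta_{\rm E}$, using $h\le(\eta_*-\eta_{\rm E})/A$ when $A>0$; the bound is trivial when $A=0$. If $\eta_{\rm E}\le d_k<\eta_*$, then $\psi(t_k)\ge\psi_-$ and the definition of $\eta_*$ give $g(\psi_--d_k)>A\,d_k$, exactly as in the proof of Theorem~\ref{thm:continuous-margin}. Hence the bracket is positive, so $d_{k+1}>d_k\ge\eta_{\rm E}$. This closes the induction and proves \eqref{eq:explicit-margin}.

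The main obstacle is the contraction step: obtaining $\|c_ke_k\|=c_k\|e_k\|$ with $c_k\ge0$ from the step-size restriction. Without it, a large funnel gain could overshoot and the estimate would fail, so this is the step I would check most carefully. The remaining estimates are routine bookkeeping with the constants in \eqref{eq:data-constants}.
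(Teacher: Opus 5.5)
Your proposal is correct and follows essentially the same argument as the paper. You use induction on $d_k=\psi(t_k)-\|e_k\|$, rewrite the update with the coefficient $1-hg/d_k\in[0,1]$ guaranteed by $h\le\eta_{\rm E}/g$, derive the gap-increment bound $d_{k+1}-d_k\ge h\bigl[-A+g\|e_k\|/d_k\bigr]$, and split cases at $\eta_*$, using $h\le(\eta_*-\eta_{\rm E})/A$ in the far-from-boundary case; the only cosmetic differences are that you state $\overline h_{\rm E}>0$ explicitly via $\eta_0\le\eta_*$ and put the boundary value $d_k=\eta_*$ in the other branch.
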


\begin{proof} Let $k\in\N_0$ be such that $(e_k,t_k)\in\mathcal F_\psi$ and $d_k:=\psi(t_k)-\|e_k\|\geq\eta_{\rm E}$, which is clear for $k=0$. We show by induction that this also holds for $e_{k+1}$. First,~\eqref{eq:h-E} gives $0\le h \le \overline h_{\rm E} \le \frac{\eta_{\rm E}}{g} \le \frac{d_k}{g}$, thus $0\leq1-hg/d_k\leq1$.  Since
\[
 e_{k+1}=\left(1-\frac{hg}{d_k}\right)e_k
          +hf(y_{\rm ref}(t_k)+e_k)-\Delta y_{{\rm ref},k},
\]
the triangle inequality, $|1-hg/d_k| = 1-hg/d_k$, $y_{\rm ref}(t_k)+e_k\in K$ and
$\|\Delta y_{{\rm ref},k}\|\leq hY_1$ imply
\[
    \|e_{k+1}\|\leq \|e_k\|+h(M_f+Y_1)-hg\frac{\|e_k\|}{d_k}.
\]
Using $\psi(t_{k+1})-\psi(t_k)\geq-h\Psi_1$ and recalling~\eqref{eq:data-constants}therefore yields
\begin{equation}\label{eq:explicit-gap-increment}
    d_{k+1}-d_k
    \geq h\left[-A+g\frac{\|e_k\|}{d_k}\right]
    \geq h\left[-A+g\frac{\psi_--d_k}{d_k}\right].
\end{equation}
If $d_k\leq\eta_*$, the right-hand side is nonnegative by the definition of $\eta_*$.  Hence $d_{k+1}\geq d_k\geq\eta_{\rm E}$.  If $d_k>\eta_*$, then the first inequality of~\eqref{eq:explicit-gap-increment} gives $d_{k+1}\geq d_k-hA$.  For $A>0$, the second restriction in \eqref{eq:h-E} implies
$d_{k+1}>\eta_*-hA\geq\eta_{\rm E}$; for $A=0$ we have $d_{k+1}\ge d_k\ge \eta_{\rm E}$. In particular, $(e_{k+1},t_{k+1})\in\mathcal F_\psi$ and the induction is complete.
\end{proof}

\begin{remark}
The simple choice $\eta_{\rm E}=\eta_0/2$ makes
\eqref{eq:h-E} completely automatic.  The two terms in the threshold reflect
different mechanisms: $h\leq\eta_{\rm E}/g$ prevents a sign reversal of the
radial damping factor, whereas the second term controls the maximal one-step
loss of gap when the current error is far from the singular boundary.
\end{remark}

For the error estimates, set $L_{\rm E}:=L_F(\eta_{\rm E})$ for $L_F(\cdot)$ defined in~\eqref{eq:L-F-eta}. 
Lemma~\ref{lem:F-Lipschitz} then gives
\begin{equation}\label{eq:F-Lipschitz-E}
    \|F(e,t)-F(z,t)\|\leq L_{\rm E}\|e-z\|
\end{equation}
for all $(e,t), (z,t)\in \mathcal F_\psi$ with
$\psi(t)-\|e\|\ge \eta_{\rm E}$ and $\psi(t)-\|z\|\geq\eta_{\rm E}$.

\begin{theorem}[Finite-horizon first-order convergence of explicit Euler]
\label{thm:explicit-finite-error}
Let $y_0\in\R^m$ be such that $(y_0-y_{\rm ref}(0),0)\in\mathcal F_\psi$ and let $y$ be the unique global solution of~\eqref{eq:closed-loop-y} with $y(0)=y_0$ according to Theorem~\ref{thm:continuous-margin}. Set $e(t):=y(t)-y_{\rm ref}(t)$ and consider, under the assumptions of Theorem~\ref{thm:explicit-margin} and with $0<h\leq\overline h_{\rm E}$, the sequence $(e_k)$ obtained from~\eqref{eq:explicit-Euler} with $e_0=e(0)$. Then, for every
$T>0$,
\begin{equation}\label{eq:explicit-finite-error}
    \|e_k-e(t_k)\|
    \leq C_{\rm E}(T)h
    \qquad (t_k = kh \leq T),
\end{equation}
where
\begin{equation*}
    C_{\rm E}(T)
    :=\frac{L_q}{2}\frac{e^{L_{\rm E}T}-1}{L_{\rm E}}.
\end{equation*}
\end{theorem}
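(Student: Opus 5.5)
The proof is a standard Lady Windermere / discrete Gronwall argument. It is valid because both the exact and the discrete trajectories stay a uniform distance $\eta_{\rm E}$ away from the funnel boundary, so the Lipschitz bound \eqref{eq:F-Lipschitz-E} applies to both.

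First, I rewrite the exact solution in one-step form. Integrating \eqref{eq:closed-loop-e} over $[t_k,t_{k+1}]$ and using that $y_{\rm ref}$ is Lipschitz gives
\[
 e(t_{k+1})=e(t_k)+\int_{t_k}^{t_{k+1}}q(s)\,\mathrm ds-\Delta y_{{\rm ref},k}
 = e(t_k)+hF(e(t_k),t_k)-\Delta y_{{\rm ref},k}+\tau_k ,
\]
where $\tau_k:=\int_{t_k}^{t_{k+1}}[q(s)-q(t_k)]\,\mathrm ds$ and $q$ is as in Lemma~\ref{lem:q-Lipschitz}. By \eqref{eq:forward-defect}, $\|\tau_k\|\le \tfrac{L_q}{2}h^2$. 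The reference increment appears identically in \eqref{eq:explicit-Euler}, so it cancels exactly in the difference. This is why the scheme was written with $\Delta y_{{\rm ref},k}$.

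Next, set $\varepsilon_k:=e_k-e(t_k)$ and subtract the two recursions:
\[
 \varepsilon_{k+1}=\varepsilon_k+h\bigl[F(e_k,t_k)-F(e(t_k),t_k)\bigr]-\tau_k .
\]
To apply \eqref{eq:F-Lipschitz-E} I need both arguments to have gap at least $\eta_{\rm E}$.
\begin{itemize}
\item For $e_k$ this is Theorem~\ref{thm:explicit-margin}.
\item For $e(t_k)$, Theorem~\ref{thm:continuous-margin} gives gap at least $\eta_0>\eta_{\rm E}$.
\end{itemize}
Hence
\[
\|\varepsilon_{k+1}\|\le (1+hL_{\rm E})\|\varepsilon_k\|+\tfrac{L_q}{2}h^2 ,\qquad \varepsilon_0=0 .
\]

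Unrolling this recursion gives
\[
\|\varepsilon_k\|\le \tfrac{L_q}{2}h^2\sum_{j=0}^{k-1}(1+hL_{\rm E})^j=\tfrac{L_q}{2}h\,\frac{(1+hL_{\rm E})^k-1}{L_{\rm E}} .
\]
Using $(1+hL_{\rm E})^k\le e^{khL_{\rm E}}\le e^{L_{\rm E}T}$ for $t_k\le T$ yields \eqref{eq:explicit-finite-error} with the stated $C_{\rm E}(T)$. Note that $L_{\rm E}>0$, since $L_f>0$.

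The only genuinely delicate point is the admissibility of the Lipschitz estimate, i.e.\ ensuring that both the numerical and the exact errors stay in the region where \eqref{eq:F-Lipschitz-E} holds. This is settled by combining the two margin theorems with $\eta_{\rm E}<\eta_0$. The remaining step is the justification of the integral identity for $e(t_{k+1})-e(t_k)$. It follows from absolute continuity of $e$, since $y$ is $C^1$ on the open domain and $y_{\rm ref}$ is Lipschitz: the weak derivative of $y_{\rm ref}$ integrates exactly to $\Delta y_{{\rm ref},k}$.
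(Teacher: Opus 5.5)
Your proposal is correct and follows essentially the same route as the paper: the integral one-step identity with the forward defect bounded via \eqref{eq:forward-defect}, the Lipschitz estimate \eqref{eq:F-Lipschitz-E} justified by combining the continuous margin $\eta_0>\eta_{\rm E}$ with the discrete margin of Theorem~\ref{thm:explicit-margin}, and the unrolled recursion $\|\varepsilon_{k+1}\|\le(1+hL_{\rm E})\|\varepsilon_k\|+\tfrac{L_q}{2}h^2$ closed with $1+x\le e^x$. The only differences are cosmetic: you use the opposite sign convention for the error, and you add a justification of the integral identity via absolute continuity.
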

\begin{proof}
The integral form of \eqref{eq:closed-loop-e} gives
\begin{equation}\label{eq:exact-forward-step}
e(t_{k+1})
=e(t_k)+hF(e(t_k),t_k)
-\Delta y_{{\rm ref},k}+\rho_{k+1},
\end{equation}
where
\begin{equation}\label{eq:rho-def}
\rho_{k+1}:=\int_{t_k}^{t_{k+1}}\bigl[q(s)-q(t_k)\bigr]\,\mathrm ds,
    \qquad q(s):=F(e(s),s),
\end{equation}
so that by \eqref{eq:forward-defect},  $\|\rho_{k+1}\|\leq \frac{L_q}{2}h^2.$ Set
$E_k:=e(t_k)-e_k.$
Subtracting \eqref{eq:explicit-Euler} from
\eqref{eq:exact-forward-step} yields
\begin{equation}\label{eq:error-identity}
    E_{k+1}
    =E_k
    +h\bigl(F(e(t_k),t_k)-F(e_k,t_k)\bigr)
    +\rho_{k+1}.
\end{equation}
The continuous margin \eqref{eq:continuous-margin} gives
$$
    \psi(t_k)-\|e(t_k)\|
    \geq \eta_0>\eta_{\rm E},
$$
whereas the discrete margin \eqref{eq:explicit-margin} gives
$$
    \psi(t_k)-\|e_k\|
    \geq \eta_{\rm E}.
$$
Hence \eqref{eq:F-Lipschitz-E} applies to $e(t_k)$ and $e_k$, and
therefore
\begin{equation}\label{eq:explicit-error-recursion}
\|E_{k+1}\|
\leq
(1+hL_{\rm E})\|E_k\|
+\frac{L_q}{2}h^2.
\end{equation}
Since $E_0=0$, iterating \eqref{eq:explicit-error-recursion} gives
\[
\|E_k\|
\leq
\frac{L_q}{2}h^2
\sum_{j=0}^{k-1}(1+hL_{\rm E})^j\
=
\frac{L_q}{2L_{\rm E}}
\bigl((1+hL_{\rm E})^k-1\bigr)h.
\]
Using $1+x\leq e^x$ for $x\ge 0$ and $t_k=kh$, we obtain
$$
    (1+hL_{\rm E})^k
    \leq e^{khL_{\rm E}}
    =e^{L_{\rm E}t_k}.
$$
Consequently, 
$$
    \|E_k\|
    \leq
    \frac{L_q}{2L_{\rm E}}
    \left(e^{L_{\rm E}t_k}-1\right)h.
$$
For $t_k\leq T$, it follows that
$$
    \|e(t_k)-e_k\|
    =\|E_k\|
    \leq
    \frac{L_q}{2L_{\rm E}}
    \left(e^{L_{\rm E}T}-1\right)h
    =C_{\rm E}(T)h,
$$
whence~\eqref{eq:explicit-finite-error}.
\end{proof}

Under the additional one-sided Lipschitz condition (Assumption~\ref{ass:osl}), the same direct estimate
also yields a uniform-in-time error bound for sufficiently small steps.

\begin{theorem}[Uniform-in-time explicit-Euler error]
\label{thm:explicit-uniform-error}
Let the assumptions of Theorem~\ref{thm:explicit-finite-error} and, additionally, Assumption~\ref{ass:osl} hold and fix $\eta_{\rm E}$ as in~\eqref{eq:eta-E-choice}.  If
\begin{equation}\label{eq:h-explicit-uniform}
    0<h\leq\min\left\{\overline h_{\rm E},
                       \frac{\alpha}{L_{\rm E}^2}\right\},
\end{equation}
then we have
\begin{equation*}
    \|e_k-e(t_k)\|\leq\frac{L_q}{\alpha}h
    \qquad \forall\, k\in\mathbb N_0.
\end{equation*}
\end{theorem}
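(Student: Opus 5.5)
We reuse the error identity \eqref{eq:error-identity} from the proof of Theorem~\ref{thm:explicit-finite-error}. Write $E_k:=e(t_k)-e_k$ and $\Delta_k:=F(e(t_k),t_k)-F(e_k,t_k)$, so that $E_{k+1}=E_k+h\Delta_k+\rho_{k+1}$ with $\|\rho_{k+1}\|\le \frac{L_q}{2}h^2$. The growth factor $1+hL_{\rm E}$ in the finite-horizon argument is too crude here. We replace it by a contraction factor of the form $\sqrt{1-h\alpha}$, obtained by expanding the square of the propagated part $E_k+h\Delta_k$.

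\textbf{Step 1 (one-step contraction).} Both $e(t_k)$ and $e_k$ lie in $\mathcal F_\psi$, with gaps at least $\eta_0>\eta_{\rm E}$ and $\eta_{\rm E}$ by \eqref{eq:continuous-margin} and \eqref{eq:explicit-margin}. Hence Lemma~\ref{lem:F-dissipative} and \eqref{eq:F-Lipschitz-E} both apply. Expanding gives
\[
\|E_k+h\Delta_k\|^2=\|E_k\|^2+2h\langle E_k,\Delta_k\rangle+h^2\|\Delta_k\|^2\le (1-2h\alpha+h^2L_{\rm E}^2)\|E_k\|^2 .
\]
The step-size condition $h\le \alpha/L_{\rm E}^2$ from \eqref{eq:h-explicit-uniform} gives $h^2L_{\rm E}^2\le h\alpha$. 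Therefore $\|E_k+h\Delta_k\|\le \sqrt{1-h\alpha}\,\|E_k\|$.

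We also need $1-h\alpha\ge 0$. This follows from the same computation, since $1-2h\alpha+h^2L_{\rm E}^2\ge (1-hL_{\rm E})^2\ge0$ because $\alpha\le L_{\rm E}$. Alternatively, $h\alpha\le \alpha^2/L_{\rm E}^2\le 1$, using $\alpha\le L_{\rm E}$, which follows from Cauchy--Schwarz applied to \eqref{eq:F-dissipative} together with \eqref{eq:F-Lipschitz-E}.

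Finally, $\sqrt{1-x}\le 1-x/2$ for $x\in[0,1]$ yields $\|E_k+h\Delta_k\|\le (1-\tfrac{h\alpha}{2})\|E_k\|$.

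\textbf{Step 2 (recursion).} By the triangle inequality,
\[
\|E_{k+1}\|\le \Bigl(1-\frac{h\alpha}{2}\Bigr)\|E_k\|+\frac{L_q}{2}h^2 .
\]
We show by induction that $\|E_k\|\le L_q h/\alpha$, starting from $E_0=0$. If $\|E_k\|\le L_qh/\alpha$, then
\[
\|E_{k+1}\|\le \Bigl(1-\frac{h\alpha}{2}\Bigr)\frac{L_qh}{\alpha}+\frac{L_q}{2}h^2=\frac{L_qh}{\alpha},
\]
which closes the induction and gives the claimed bound for all $k\in\mathbb N_0$.

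The main obstacle is Step 1: the constants must match exactly. This requires $1-h\alpha\ge0$, so that the square root is legitimate, and the halving from $\sqrt{1-x}\le 1-x/2$, which is precisely what produces the constant $L_q/\alpha$ rather than $L_q/(2\alpha)$.
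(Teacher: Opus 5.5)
Your proposal is correct and follows the paper's proof essentially step for step. It uses the same squared one-step bound with factor $1-2\alpha h+L_{\rm E}^2h^2$, the same use of $h\le\alpha/L_{\rm E}^2$ to reduce this to $1-\alpha h$, and the same inequality $\sqrt{1-x}\le 1-x/2$. The differences are cosmetic: you close with an induction on the invariant bound $\|E_k\|\le L_qh/\alpha$ instead of summing the geometric series in $q_{\rm E}$, and you explicitly justify $\alpha\le L_{\rm E}$ via Cauchy--Schwarz, which the paper uses tacitly when writing $\alpha^2/L_{\rm E}^2\le 1$.
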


\begin{proof}
For all $(e,t), (z,t)\in \mathcal F_\psi$ with
$\psi(t)-\|e\|\ge \eta_{\rm E}$ and $\psi(t)-\|z\|\geq\eta_{\rm E}$,
\eqref{eq:F-dissipative} and \eqref{eq:F-Lipschitz-E} imply
\begin{align*}
 \|(e-z)+h(F(e,t)-F(z,t))\|^2\leq
 \bigl(1-2\alpha h+L_{\rm E}^2h^2\bigr)\|e-z\|^2.
\end{align*}
Set
$q_{\rm E}:=(1-2\alpha h+L_{\rm E}^2h^2)^{1/2}$.  
Then~\eqref{eq:h-explicit-uniform} gives $0\leq\alpha h\leq\alpha^2/L_{\rm E}^2\leq1$ and
$q_{\rm E}^2\leq1-\alpha h$.  Since
$1-\sqrt{1-x}\geq x/2$ for $x\in[0,1]$, it follows for $x=\alpha h$ that
\[
    1-q_{\rm E}\geq 1-\sqrt{1-\alpha h} \ge \alpha h/2.
\]
The error identity~\eqref{eq:error-identity} together with $\|\rho_{k+1}\|\leq \frac{L_q}{2}h^2$ by \eqref{eq:forward-defect} then gives
\[
    \|E_{k+1}\|\leq q_{\rm E}\|E_k\|+\frac{L_q}{2}h^2.
\]
Iteration and $E_0=0$ yield
\[
    \|E_k\|\leq\frac{L_qh^2}{2(1-q_{\rm E})}
    \leq\frac{L_q}{\alpha}h.
\]
\end{proof}

\subsection{Implicit Euler}\label{subsec:implicit}
The implicit Euler scheme is given by
\begin{equation}\label{eq:implicit-Euler}
    e_{k+1}=e_k+hF(e_{k+1},t_{k+1})
    -\Delta y_{{\rm ref},k}.
\end{equation}
An \emph{admissible solution} of this equation is, by definition, a sequence $(e_k)$
satisfying $\|e_{k+1}\|<\psi(t_{k+1})$ (i.e., $(e_{k+1},t_{k+1})\in \mathcal F_\psi$) for all $k\in\N_0$. 

\begin{proposition}[Admissible solvability and discrete funnel margin]
\label{prop:implicit-solvability}
For every $h>0$ and every $e_k\in\mathbb R^m$, equation
\eqref{eq:implicit-Euler} has at least one admissible solution $e_{k+1}\in\R^m$.  If $\psi(0) - \|e_0\|\ge \eta_0$, then every sequence $(e_k)$ of admissible solutions satisfies
\begin{equation}\label{eq:implicit-margin}
    \psi(t_k)-\|e_k\|\geq\eta_0
    \qquad \forall\, k\in\mathbb N_0.
\end{equation}
If Assumption~\ref{ass:osl} holds, the admissible solution $e_{k+1}$ of~\eqref{eq:implicit-Euler} at each step is unique for every $h>0$.
\end{proposition}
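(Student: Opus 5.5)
The plan has three parts: existence via a Brouwer-type zero-finding argument on a ball strictly inside the funnel, the margin \eqref{eq:implicit-margin} via a gap recursion analogous to \eqref{eq:explicit-gap-increment}, and uniqueness via Lemma~\ref{lem:F-dissipative}.

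\emph{Existence.} Fix $h>0$ and $e_k\in\R^m$. Set $c:=e_k-\Delta y_{{\rm ref},k}$ and $\psi_1:=\psi(t_{k+1})$. For $\|x\|<\psi_1$ define
\[
G(x):=x+h\gamma(x,t_{k+1})-hf\bigl(y_{\rm ref}(t_{k+1})+x\bigr)-c .
\]
Solutions of \eqref{eq:implicit-Euler} with $(e_{k+1},t_{k+1})\in\mathcal F_\psi$ are exactly the zeros of $G$ in $B_{\psi_1}(0)$. For $\delta\in(0,\psi_1)$, put $r:=\psi_1-\delta$. If $\|x\|=r$, then $\|y_{\rm ref}(t_{k+1})+x\|\le Y_0+\psi_+$, so this point lies in $K$. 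Hence
\[
\langle x,G(x)\rangle\ \ge\ r\Bigl(r+\tfrac{hg\,r}{\delta}-hM_f-\|c\|\Bigr).
\]
As $\delta\downarrow0$ we have $r\to\psi_1>0$, so the bracket tends to $+\infty$; choose $\delta$ small enough that it is positive. $G$ is continuous on $\overline B_r(0)$. The standard consequence of Brouwer's fixed-point theorem (the ``acute angle'' lemma: $\langle x,G(x)\rangle>0$ on $\|x\|=r$ implies a zero in $B_r(0)$) then gives a root $e_{k+1}$ with $\|e_{k+1}\|<r<\psi_1$. I expect this to be the main step. The only subtlety is that the coercivity must come from the singular term $hg r/\delta$, which is why the ball is shrunk toward the boundary rather than fixed.

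\emph{Discrete margin.} I will argue by induction on $k$; the base case is the hypothesis $\psi(0)-\|e_0\|\ge\eta_0$. Let $e_{k+1}$ be any admissible root and write $d_j:=\psi(t_j)-\|e_j\|$. Rewrite \eqref{eq:implicit-Euler} as
\[
\bigl(1+hg/d_{k+1}\bigr)e_{k+1}=e_k+hf\bigl(y_{\rm ref}(t_{k+1})+e_{k+1}\bigr)-\Delta y_{{\rm ref},k}.
\]
Take norms, using $\|f\|\le M_f$ on $K$ and $\|\Delta y_{{\rm ref},k}\|\le hY_1$. Then use $\psi(t_k)\le\psi(t_{k+1})+h\Psi_1$. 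This yields
\[
d_{k+1}\ \ge\ d_k+h\Bigl[-A+g\tfrac{\|e_{k+1}\|}{d_{k+1}}\Bigr]\ \ge\ d_k+h\Bigl[-A+g\tfrac{\psi_--d_{k+1}}{d_{k+1}}\Bigr],
\]
which is the implicit analogue of \eqref{eq:explicit-gap-increment} with the bracket evaluated at the new node. Suppose $d_{k+1}<\eta_0\le\eta_*$. By the definition of $\eta_*$ the bracket is then strictly positive, so $d_{k+1}>d_k\ge\eta_0$, a contradiction. Hence $d_{k+1}\ge\eta_0$, and the induction closes. No step-size restriction appears, since no sign-reversal issue arises as in the explicit case.

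\emph{Uniqueness.} Let $x,z$ be two admissible roots for the same $e_k$. Subtracting the two equations gives $x-z=h\bigl(F(x,t_{k+1})-F(z,t_{k+1})\bigr)$. Taking the inner product with $x-z$ and applying Lemma~\ref{lem:F-dissipative} (valid since both points lie in $\mathcal F_\psi$ at time $t_{k+1}$, so the shifted arguments lie in $K$) gives $\|x-z\|^2\le-h\alpha\|x-z\|^2$. Since $\alpha>0$ and $h>0$, this forces $x=z$, for every $h>0$.
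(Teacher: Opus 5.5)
Your proposal is correct and follows the paper's proof essentially step for step. Existence comes from positivity of $\langle G(\xi),\xi\rangle$ on a sphere close to the funnel boundary plus a Brouwer argument; the paper proves the acute-angle lemma inline instead of citing it. The margin follows from the implicit gap recursion \eqref{eq:implicit-gap-increment} with a contradiction argument, and uniqueness from Lemma~\ref{lem:F-dissipative}. Taking norms of $(1+hg/d_{k+1})e_{k+1}=\dots$, rather than pairing with $e_{k+1}/\|e_{k+1}\|$, only spares you the paper's separate treatment of the case $e_{k+1}=0$.
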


\begin{proof}
Fix $k\in\N_0$, abbreviate $b_k:=e_k-\Delta y_{{\rm ref},k}$, and define
\begin{multline*}
    G_{k+1}:B_{\psi(t_{k+1})}(0)\to \R^m,\\ \xi \mapsto\xi-b_k-hf(y_{\rm ref}(t_{k+1})+\xi)
      +h\frac{g\xi}{\psi(t_{k+1})-\|\xi\|}.
\end{multline*}
For $\|\xi\|<\psi(t_{k+1})$ we have $y_{\rm ref}(t_{k+1})+\xi \in K$ for $K$ as in~\eqref{eq:data-constants}, so $\norm{f(y_{\rm ref}(t_{k+1})+\xi)}\le M_f$ and
\[
 \langle G_{k+1}(\xi),\xi\rangle
 \geq \|\xi\|^2-(\|b_k\|+hM_f)\|\xi\|+h\frac{g\|\xi\|^2}{\psi(t_{k+1})-\|\xi\|}.
\]
The last term tends to $+\infty$ as $\|\xi\|\uparrow \psi(t_{k+1})$.  Hence there exists
$R\in(0,\psi(t_{k+1}))$ such that
$\langle G_{k+1}(\xi),\xi\rangle>0$ on $\|\xi\|=R$.  If $G_{k+1}$ had no
zero in $\overline B_R(0)$, then the continuous map
\[
    \Phi:\overline B_R(0)\to\overline B_R(0),\ \xi\mapsto -R\frac{G_{k+1}(\xi)}{\|G_{k+1}(\xi)\|}
\]
would have a 
fixed point $\xi^*\in \overline B_R(0)$ by Brouwer's fixed-point theorem. Since
$\|\Phi(\xi)\|=R$ for all $\xi\in \overline B_R(0)$, this fixed point lies on the sphere and satisfies
$G_{k+1}(\xi^*)=-\lambda\xi^*$ for $\lambda:= \|G_{k+1}(\xi^*)\|/R>0$, thus
\[
    \langle G_{k+1}(\xi^*),\xi^*\rangle = -\lambda \norm{\xi^*}^2 < 0,
\]
contradicting $\langle G_{k+1}(\xi),\xi\rangle>0$ on $\|\xi\|=R$.  Thus there exists $e_{k+1}\in B_{\psi(t_{k+1})}(0)$ with $G_{k+1}(e_{k+1})=0$, which is a solution of~\eqref{eq:implicit-Euler}.

For the margin, set $d_k:=\psi(t_k)-\|e_k\|$.  If $e_{k+1}=0$, then
$d_{k+1}=\psi(t_{k+1})\geq\psi_-\geq\eta_0$.  Otherwise, take the inner
product of~\eqref{eq:implicit-Euler} with
$e_{k+1}/\norm{e_{k+1}}$.  This gives
\[
    \norm{e_{k+1}}\leq \|e_k\|+h(M_f+Y_1)
    -hg\frac{\norm{e_{k+1}}}{d_{k+1}},
\]
and therefore, using~\eqref{eq:data-constants},
\begin{align}
    d_{k+1}-d_k &\ge \psi(t_{k+1}) - \psi(t_k) - \big(\norm{e_{k+1}} - \norm{e_k}\big)\notag \\
    &\ge -h \Psi_1 - h(M_f+Y_1) + hg\frac{\norm{e_{k+1}}}{d_{k+1}} \notag \\
    &\geq h\left(-A+g\frac{\psi_--d_{k+1}}{d_{k+1}}\right). \label{eq:implicit-gap-increment}
\end{align}

If $d_k\geq\eta_0$ but $d_{k+1}<\eta_0\leq\eta_*$, the right-hand side of~\eqref{eq:implicit-gap-increment} is positive by the definition of $\eta^*$, so that
$d_{k+1}>d_k\geq\eta_0$, a contradiction.  Induction using $d_0\ge \eta_0$ proves~\eqref{eq:implicit-margin}.

Finally, suppose that $e_{k+1}$ and $z_{k+1}$ are two admissible solutions of~\eqref{eq:implicit-Euler}
for the same previous value~$e_k$ and that Assumption~\ref{ass:osl} holds.  Subtracting
the equations and taking the inner product with
$\delta:=e_{k+1}-z_{k+1}$ gives
\[
    \|\delta\|^2
    =h\langle\delta,F(e_{k+1},t_{k+1})
                         -F(z_{k+1},t_{k+1})\rangle
    \leq-h\alpha\|\delta\|^2
\]
by Lemma~\ref{lem:F-dissipative} since $\|e_{k+1}\|<\psi(t_{k+1})$ and $\|z_{k+1}\|<\psi(t_{k+1})$.
Thus $(1+h\alpha)\|\delta\|^2\leq0$, yielding $\delta=0$.
\end{proof}

\begin{remark}[Root selection]
Proposition~\ref{prop:implicit-solvability} concerns roots in the open funnel
ball.  Without Assumption~\ref{ass:osl}, the implicit relation can genuinely
be set-valued. For example, let $m=1$, $\psi\equiv1$,
$y_{\rm ref}\equiv0$, $g=1$, $f(y)=ay$, and $e_k=0$.  If
$a>1+1/h$, then the implicit equation has the three admissible roots
\[
    0,\qquad
    \pm\left(1-\frac{1}{a-1/h}\right).
\]
Thus existence and the nodal margin alone do not select a numerical branch.
The next theorem shows that sufficiently small steps make the admissible root
unique along the trajectory initialized at $e(0)$.  If the rational feedback
is artificially extended beyond the funnel boundary, additional exterior
algebraic roots may also occur.  A numerical implementation should therefore
solve the constrained problem or use a parameterization that enforces
admissibility.
\end{remark}

For the error analysis, set 
    $L_{\rm I}:=L_F(\eta_0)$. 
The exact solution of~\eqref{eq:closed-loop-e} satisfies the backward-step identity
\begin{equation}\label{eq:exact-backward-step}
    e(t_{k+1})=e(t_k)+hF(e(t_{k+1}),t_{k+1})
    -\Delta y_{{\rm ref},k}+\tau_{k+1},
\end{equation}
where
\begin{equation}\label{eq:tau-def}
    \tau_{k+1}:=\int_{t_k}^{t_{k+1}}\bigl[q(s)-q(t_{k+1})\bigr]\,\mathrm ds,
\end{equation}
with $q$ as in~\eqref{eq:rho-def}, so that \eqref{eq:backward-defect} gives
\begin{equation}\label{eq:backward-defect-tau}
    \|\tau_{k+1}\|\leq\frac{L_q}{2}h^2.
\end{equation}

\begin{theorem}[Finite-horizon first-order convergence of implicit Euler]
\label{thm:implicit-finite-error}
Let $y_0\in\R^m$ be such that $(y_0-y_{\rm ref}(0),0)\in\mathcal F_\psi$ and let $y$ be the unique global solution of~\eqref{eq:closed-loop-y} with $y(0)=y_0$ according to Theorem~\ref{thm:continuous-margin}. Set $e(t):=y(t)-y_{\rm ref}(t)$ and consider, for $e_0\in\R^m$ with $\psi(0) - \|e_0\|\ge \eta_0$ and with
\begin{equation}\label{eq:h-implicit-finite}
    0<h\leq\frac{1}{2L_{\rm I}},
\end{equation}
\textit{some} admissible solution $(e_k)$ obtained from~\eqref{eq:implicit-Euler} with $e_0=e(0)$. Then $e_{k+1}$ is the unique solution of \eqref{eq:implicit-Euler} for every $k\in\N_0$. Furthermore, for every $T>0$,
\begin{equation}\label{eq:implicit-finite-error}
    \|e_k-e(t_k)\|\leq C_{\rm I}(T)h
    \qquad (t_k\leq T),
\end{equation}
where
\begin{equation*}
    C_{\rm I}(T)
    :=\frac{L_q}{2L_{\rm I}}
      \left(e^{2L_{\rm I}T}-1\right).
\end{equation*}
\end{theorem}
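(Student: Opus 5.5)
The proof has two parts: uniqueness of the admissible root under the step-size restriction \eqref{eq:h-implicit-finite}, and a discrete Gronwall argument based on the backward-step identity \eqref{eq:exact-backward-step}. Both parts rely on the nodal margin \eqref{eq:implicit-margin} from Proposition~\ref{prop:implicit-solvability}. Since $\psi(0)-\|e_0\|\geq\eta_0$, every admissible sequence satisfies $\psi(t_k)-\|e_k\|\geq\eta_0$ for all $k$. The exact solution satisfies the same bound by Theorem~\ref{thm:continuous-margin}. Hence Lemma~\ref{lem:F-Lipschitz} with $\eta=\eta_0$ applies to all pairs of nodal values, with constant $L_{\rm I}=L_F(\eta_0)$.

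For uniqueness, suppose $e_{k+1}$ and $z_{k+1}$ both solve \eqref{eq:implicit-Euler} for the same $e_k$ and are admissible. By Proposition~\ref{prop:implicit-solvability}, applied to the sequence continued by either root, both roots lie at distance at least $\eta_0$ from the boundary. Subtracting the two equations gives $\delta=h\bigl(F(e_{k+1},t_{k+1})-F(z_{k+1},t_{k+1})\bigr)$ with $\delta:=e_{k+1}-z_{k+1}$. Therefore $\|\delta\|\leq hL_{\rm I}\|\delta\|\leq\tfrac12\|\delta\|$, so $\delta=0$. The margin argument in the proposition is a one-step statement: it uses only $d_k\geq\eta_0$ and the equation at step $k$. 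So this conclusion holds for any admissible root, not just the one selected, and uniqueness follows by induction along the trajectory.

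For the error, set $E_k:=e(t_k)-e_k$ and subtract \eqref{eq:implicit-Euler} from \eqref{eq:exact-backward-step}:
\[
E_{k+1}=E_k+h\bigl(F(e(t_{k+1}),t_{k+1})-F(e_{k+1},t_{k+1})\bigr)+\tau_{k+1}.
\]
The margins give $\|E_{k+1}\|\leq\|E_k\|+hL_{\rm I}\|E_{k+1}\|+\tfrac{L_q}{2}h^2$ by \eqref{eq:backward-defect-tau}. Because $hL_{\rm I}\leq\tfrac12$, we can divide by $1-hL_{\rm I}$ and obtain $\|E_{k+1}\|\leq(1-hL_{\rm I})^{-1}\bigl(\|E_k\|+\tfrac{L_q}{2}h^2\bigr)$. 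The elementary bound $(1-x)^{-1}\leq1+2x\leq e^{2x}$ for $x\in[0,\tfrac12]$ then turns this into $\|E_{k+1}\|\leq(1+2hL_{\rm I})\|E_k\|+L_qh^2$.

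Iterating from $E_0=0$ gives
\[
\|E_k\|\leq L_qh^2\sum_{j<k}(1+2hL_{\rm I})^j=\frac{L_qh}{2L_{\rm I}}\bigl((1+2hL_{\rm I})^k-1\bigr)\leq\frac{L_q}{2L_{\rm I}}\bigl(e^{2L_{\rm I}t_k}-1\bigr)h.
\]
This equals $C_{\rm I}(T)h$ for $t_k\leq T$.

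The main obstacle is bookkeeping rather than analysis. One has to check that the factor $2$ coming from $(1-x)^{-1}\leq1+2x$ produces exactly the stated constant, and that the margin holds for the competing root in the uniqueness step without circularity.
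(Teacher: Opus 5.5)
Your proposal is correct and follows the paper's proof essentially step by step. Uniqueness comes from the nodal margin together with $hL_{\rm I}\le\frac12$, and the error bound from the backward-step recursion $(1-hL_{\rm I})\|E_{k+1}\|\le\|E_k\|+\frac{L_q}{2}h^2$. The only difference is cosmetic: you pass through $(1-x)^{-1}\le 1+2x$ to a forward-type recursion, whereas the paper sums the geometric series in $q_h=(1-hL_{\rm I})^{-1}$ exactly and then uses $q_h\le e^{2hL_{\rm I}}$; both give exactly $C_{\rm I}(T)$.
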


\begin{proof} We prove the assertion on uniqueness by induction on $k$. For $k=0$ the value of $e_0$ is unique by assumption. Assume that for some $k\in\N_0$, $e_{k+1}$ and $z_{k+1}$ are two solutions of~\eqref{eq:implicit-Euler}. 
By Proposition~\ref{prop:implicit-solvability} we have that~\eqref{eq:implicit-margin} holds for $e_{k+1}$ and $z_{k+1}$.
Then Lemma~\ref{lem:F-Lipschitz} together with $L_{\rm I}=L_F(\eta_0)$ gives
\[
    \|e_{k+1}-z_{k+1}\| = h\norm{F(e_{k+1},t_{k+1}) - F(z_{k+1},t_{k+1})}
    \leq hL_{\rm I}\|e_{k+1}-z_{k+1}\|.
\]
Condition \eqref{eq:h-implicit-finite} therefore implies $e_{k+1}=z_{k+1}$, which completes the induction.

Now let $E_k:=e_k-e(t_k)$.  Subtracting
\eqref{eq:exact-backward-step} from \eqref{eq:implicit-Euler} yields
\begin{equation}\label{eq:error-implicit}
    E_{k+1}
    =E_k
    +h\bigl(F(e_{k+1},t_{k+1})-F(e(t_{k+1}),t_k)\bigr)
    -\tau_{k+1}.
\end{equation}
The continuous margin \eqref{eq:continuous-margin} gives
$$
    \psi(t_{k+1})-\|e(t_{k+1})\|
    \geq \eta_0,
$$
whereas the discrete margin \eqref{eq:implicit-margin} gives
$$
    \psi(t_{k+1})-\|e_{k+1}\|
    \geq \eta_0.
$$
Hence Lemma~\ref{lem:F-Lipschitz} applies to $e(t_{k+1})$ and $e_{k+1}$, and
therefore
\[
    \norm{F(e_{k+1},t_{k+1}) - F(e(t_{k+1}),t_{k+1})}
    \leq L_{\rm I}\|E_{k+1}\|.
\]
Together with~\eqref{eq:backward-defect-tau} this yields
\[
    (1-hL_{\rm I})\|E_{k+1}\|
    \leq\|E_k\|+\frac{L_q}{2}h^2.
\]
Set $q_h:=(1-hL_{\rm I})^{-1}$.  Condition~\eqref{eq:h-implicit-finite} gives $0\le hL_{\rm I}\le1/2$, so $1-hL_{\rm I}\ge1/2>0$ and $q_h$ is well defined and positive.  To see $q_h\leq e^{2hL_{\rm I}}$, set $x:=hL_{\rm I}\in[0,1/2]$; it suffices to show $g(x):=e^{2x}(1-x)\geq1$ on $[0,1/2]$, since this is equivalent to $1-x\geq e^{-2x}=1/e^{2x}$, i.e.\ $q_h=(1-x)^{-1}\leq e^{2x}$.  We have $g(0)=1$ and
\[
    g'(x)=2e^{2x}(1-x)-e^{2x}=e^{2x}(1-2x)\geq0
    \qquad\text{for }x\in[0,1/2],
\]
so $g$ is nondecreasing on $[0,1/2]$ and hence $g(x)\geq g(0)=1$ there, which is the claimed inequality.  Iteration from
$E_0=0$ gives
\begin{align*}
    \|E_k\|
    &\leq\frac{L_q}{2}h^2
           \sum_{j=1}^{k}q_h^j=\frac{L_q}{2}h^2\,q_h\,\frac{q_h^k-1}{q_h-1}\\
   &=\frac{L_q}{2}h\,\frac{q_h}{hL_{\rm I}/(1-hL_{\rm I})}\bigl(q_h^k-1\bigr)
    \qquad\text{(since }q_h-1=\tfrac{hL_{\rm I}}{1-hL_{\rm I}}\text{)}\\
    &=\frac{L_q}{2L_{\rm I}}h\,(1-hL_{\rm I})q_h\bigl(q_h^k-1\bigr)
    =\frac{L_q}{2L_{\rm I}}h\,(q_h^k-1)\\
    &\leq\frac{L_q}{2L_{\rm I}}
           \left(e^{2L_{\rm I}t_k}-1\right)h,
\end{align*}
whence \eqref{eq:implicit-finite-error}.
\end{proof}

Under
Assumption~\ref{ass:osl}, the implicit-Euler flow is contractive in the following sense, resulting in
two admissible trajectories with different initial gaps to merge geometrically,
independent of the step size $h>0$.

\begin{lemma}[Contractivity of implicit Euler]
\label{lem:implicit-contractivity}
Let Assumption~\ref{ass:osl} hold and let $(e_k)$ and $(z_k)$ be two admissible solutions of~\eqref{eq:implicit-Euler}.  Then
\begin{equation}\label{eq:implicit-contractivity}
    \|e_{k+1}-z_{k+1}\|
    \leq\frac{1}{1+h\alpha}\|e_k-z_k\|,
\end{equation}
and hence
\[
    \|e_k-z_k\|\leq(1+h\alpha)^{-k}\|e_0-z_0\|.
\]
\end{lemma}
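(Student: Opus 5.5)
The plan is to compare the two implicit Euler relations at step $k+1$ and use the dissipativity estimate of Lemma~\ref{lem:F-dissipative}, in the same way as in the uniqueness part of Proposition~\ref{prop:implicit-solvability}. Both trajectories are driven by the same reference increment $\Delta y_{{\rm ref},k}$, so subtracting the two instances of \eqref{eq:implicit-Euler} removes it. Writing $\delta_j:=e_j-z_j$, this gives
\[
    \delta_{k+1}=\delta_k+h\bigl(F(e_{k+1},t_{k+1})-F(z_{k+1},t_{k+1})\bigr).
\]

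Next I will take the inner product of this identity with $\delta_{k+1}$. Since both sequences are admissible, $(e_{k+1},t_{k+1})$ and $(z_{k+1},t_{k+1})$ lie in $\mathcal F_\psi$. Lemma~\ref{lem:F-dissipative} therefore applies and bounds the $F$-term by $-h\alpha\|\delta_{k+1}\|^2$. Applying Cauchy--Schwarz to $\langle\delta_{k+1},\delta_k\rangle$ yields
\[
    (1+h\alpha)\|\delta_{k+1}\|^2\leq\|\delta_k\|\,\|\delta_{k+1}\|.
\]
If $\delta_{k+1}=0$, the bound \eqref{eq:implicit-contractivity} holds trivially. Otherwise, dividing by $\|\delta_{k+1}\|$ gives \eqref{eq:implicit-contractivity}, using that $1+h\alpha>0$ because $\alpha>0$.

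The geometric bound then follows by induction on $k$, applying \eqref{eq:implicit-contractivity} at each step. No step restriction and no funnel margin are required, because dissipativity holds on all of $\mathcal F_\psi$. There is no genuine obstacle; the one point to check is that admissibility is exactly what places both iterates in the domain where Lemma~\ref{lem:F-dissipative} is valid. That lemma, in turn, needs $y_{\rm ref}(t)+e\in K$, and this holds because $\|e\|<\psi(t)\le\psi_+$.
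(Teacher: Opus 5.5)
Your proposal is correct and follows essentially the same route as the paper. You subtract the two implicit Euler relations, take the inner product with $\delta_{k+1}$, and apply Lemma~\ref{lem:F-dissipative}, which is valid because admissibility places both iterates in $\mathcal F_\psi$. You then use Cauchy--Schwarz, divide by $\|\delta_{k+1}\|$ (treating the zero case separately), and obtain the geometric bound by induction.
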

\begin{proof}
Subtracting the two equations of form~\eqref{eq:implicit-Euler} gives
\[
 \delta_{k+1}=\delta_k+h\bigl(F(e_{k+1},t_{k+1})
                                  -F(z_{k+1},t_{k+1})\bigr),
 \qquad \delta_j:=e_j-z_j.
\]
Taking the inner product with $\delta_{k+1}$ and using
Lemma~\ref{lem:F-dissipative} yields
\[
 (1+h\alpha)\|\delta_{k+1}\|^2
 \leq\|\delta_k\|\,\|\delta_{k+1}\|.
\]
Division by $\|\delta_{k+1}\|$ when it is nonzero proves
\eqref{eq:implicit-contractivity}; the zero case is immediate.
\end{proof}

\begin{theorem}[Uniform-in-time implicit-Euler error]
\label{thm:implicit-uniform-error}
Let Assumption~\ref{ass:osl} hold and let $y_0\in\R^m$ be such that $(y_0-y_{\rm ref}(0),0)\in\mathcal F_\psi$ and let $y$ be the unique global solution of~\eqref{eq:closed-loop-y} with $y(0)=y_0$ according to Theorem~\ref{thm:continuous-margin}. Set $e(t):=y(t)-y_{\rm ref}(t)$ and let $(e_k)$ be the unique admissible solution of~\eqref{eq:implicit-Euler} with $e_0=e(0)$ according to Proposition~\ref{prop:implicit-solvability}. Then, for every $h>0$,
\begin{equation}\label{eq:implicit-uniform-error}
    \|e_k-e(t_k)\|\leq\frac{L_q}{2\alpha}h
    \qquad \forall\, k\in\mathbb N_0.
\end{equation}
Consequently, a prescribed nodal accuracy $\delta>0$ is guaranteed by
\begin{equation}\label{eq:implicit-accuracy-step}
    0<h\leq\frac{2\alpha\delta}{L_q}.
\end{equation}
\end{theorem}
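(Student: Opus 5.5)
The plan is to compare the implicit scheme with the exact backward-step identity \eqref{eq:exact-backward-step}, and to use the dissipativity of Lemma~\ref{lem:F-dissipative} in place of the Lipschitz bound. This avoids any step-size restriction and any exponential growth in the constant.

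First we check that both sequences live where Lemma~\ref{lem:F-dissipative} applies. By Proposition~\ref{prop:implicit-solvability}, admissible roots exist for every $h>0$ and are unique under Assumption~\ref{ass:osl}, so $(e_k)$ is well defined. Each $e_{k+1}$ satisfies $\|e_{k+1}\|<\psi(t_{k+1})$. By Theorem~\ref{thm:continuous-margin}, $(e(t_{k+1}),t_{k+1})\in\mathcal F_\psi$. Hence \eqref{eq:F-dissipative} may be applied at time $t_{k+1}$ to the pair $e_{k+1}$, $e(t_{k+1})$. No margin or Lipschitz constant is needed here, because \eqref{eq:F-dissipative} holds on all of $\mathcal F_\psi$.

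Next we set up the error recursion. Put $E_k:=e_k-e(t_k)$ and subtract \eqref{eq:exact-backward-step} from \eqref{eq:implicit-Euler}:
\[
E_{k+1}=E_k+h\bigl(F(e_{k+1},t_{k+1})-F(e(t_{k+1}),t_{k+1})\bigr)-\tau_{k+1}.
\]
Note that the time argument is $t_{k+1}$ in both terms. Take the inner product with $E_{k+1}$ and apply Lemma~\ref{lem:F-dissipative} together with Cauchy--Schwarz:
\[
(1+h\alpha)\|E_{k+1}\|^2\le \bigl(\|E_k\|+\|\tau_{k+1}\|\bigr)\|E_{k+1}\|.
\]
Dividing by $\|E_{k+1}\|$ (the case $E_{k+1}=0$ is trivial) and using \eqref{eq:backward-defect-tau} gives
\[
\|E_{k+1}\|\le \frac{1}{1+h\alpha}\Bigl(\|E_k\|+\frac{L_q}{2}h^2\Bigr).
\]

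Finally we iterate this recursion from $E_0=0$. With $r:=(1+h\alpha)^{-1}<1$,
\[
\|E_k\|\le \frac{L_q}{2}h^2\sum_{j=1}^{k} r^j\le \frac{L_q}{2}h^2\,\frac{r}{1-r}=\frac{L_q}{2}h^2\cdot\frac{1}{h\alpha}=\frac{L_q}{2\alpha}h.
\]
This is \eqref{eq:implicit-uniform-error}. The accuracy statement follows at once: requiring $\frac{L_q}{2\alpha}h\le\delta$ is exactly \eqref{eq:implicit-accuracy-step}.

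The only delicate point is ensuring that $L_q$, which comes from Lemma~\ref{lem:q-Lipschitz}, is legitimately available. This holds because the defect $\tau_{k+1}$ involves only the exact solution, and its bound rests on the continuous margin $\eta_0$ of Theorem~\ref{thm:continuous-margin}. It does not depend on any property of the discrete iterates. Apart from this check, the argument is routine.
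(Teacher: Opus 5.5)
Your proof is correct and follows the paper's argument essentially step for step. Both use the backward-step error identity, take the inner product with $E_{k+1}$ using Lemma~\ref{lem:F-dissipative} and Cauchy--Schwarz, and sum a geometric series with ratio $(1+h\alpha)^{-1}$ via $r/(1-r)=1/(h\alpha)$. You are also right to insist on the time argument $t_{k+1}$ in both $F$-terms: the paper's display \eqref{eq:error-implicit} writes $t_k$ in the second term, a typo, as its later use of \eqref{eq:F-dissipative} at $t_{k+1}$ shows.
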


\begin{proof}
Put $E_k:=e_k-e(t_k)$, then~\eqref{eq:error-implicit} holds. 
Taking the inner product with $E_{k+1}$ yields
\begin{align*}
\|E_{k+1}\|^2
&=
\langle E_k,E_{k+1}\rangle
+h\left\langle
F(e_{k+1},t_{k+1})
-F(e(t_{k+1}),t_{k+1}),
E_{k+1}
\right\rangle\
\\&\quad -\langle\tau_{k+1},E_{k+1}\rangle.
\end{align*}
By~\eqref{eq:F-dissipative} we have
$$
    \left\langle
        F(e_{k+1},t_{k+1})
        -F(e(t_{k+1}),t_{k+1}),
        E_{k+1}
    \right\rangle
    \leq
    -\alpha\|E_{k+1}\|^2.
$$
and~\eqref{eq:backward-defect-tau} gives
$$
    -\langle\tau_{k+1},E_{k+1}\rangle
    \leq
    \|\tau_{k+1}\|\,\|E_{k+1}\|
    \leq
    \frac{L_q}{2}h^2\|E_{k+1}\|,
$$
thus
$$
    (1+h\alpha)\|E_{k+1}\|^2
    \leq
    \left(
        \|E_k\|
        +\frac{L_q}{2}h^2
    \right)
    \|E_{k+1}\|.
$$
If $E_{k+1}=0$, the desired estimate is immediate. Otherwise,
division by $\|E_{k+1}\|$ gives
\begin{equation}\label{eq:implicit-error-recursion}
(1+h\alpha)\|E_{k+1}\|
\leq
\|E_k\|
+\frac{L_q}{2}h^2.
\end{equation}
Define $    q_\alpha:=\frac{1}{1+h\alpha}.$
 Since $h>0$ and $\alpha>0$, we have $0<q_\alpha<1$. Thus
\eqref{eq:implicit-error-recursion} is equivalent to
\begin{equation}\label{eq:implicit-error-recursion-q}
\|E_{k+1}\|
\leq
q_\alpha\|E_k\|
+q_\alpha\frac{L_q}{2}h^2.
\end{equation}
Iterating this estimate and using $E_0=0$ gives, similar to the proof of Theorem~\ref{thm:implicit-finite-error},
\begin{align*}    
    \|E_k\|
    &\leq
    \frac{L_q}{2}h^2
    \sum_{j=1}^{k}q_\alpha^j
    =
    q_\alpha\frac{L_q}{2}h^2
    \sum_{j=0}^{k-1}q_\alpha^j\\
    &\leq 
   q_\alpha\frac{L_q}{2}h^2
    \frac{1}{1-q_\alpha}
    \le \frac{L_q}{2}h^2\frac{1}{h\alpha},
\end{align*}
whence \eqref{eq:implicit-uniform-error}. Finally, \eqref{eq:implicit-accuracy-step} with \eqref{eq:implicit-uniform-error} guarantees an accuracy of~$\delta$, as $\|e_k-e(t_k)\|\le \delta$.
\end{proof}

\begin{remark}
The implicit scheme preserves a quantitative funnel margin for every
$h>0$; condition \eqref{eq:implicit-accuracy-step} concerns accuracy only.
Under Assumption~\ref{ass:osl}, explicit Euler also has a uniform error bound,
but only under the additional stability and invariance restriction
\eqref{eq:h-explicit-uniform}.  This is the essential qualitative distinction
between the two Euler schemes in the present analysis.
\end{remark}

\section{Numerical study}\label{sec:simulation}
This section illustrates the preceding results by simulating the
funnel-controlled tracking error under explicit and implicit Euler
discretization for varying step sizes. We consider system~\eqref{eq:system} with $f(x)=-0.3x^3$ and $g=1$, in
closed loop with the funnel controller~\eqref{eq:FC} for the prescribed
performance funnel $\psi(t)=15e^{-t}+0.3$ and the sinusoidal reference
trajectory $y_{\rm ref}(t)=\sin(t)$.  The initial tracking error is
$e_0=0.5$ in both cases, and the horizon is $[0,T]$ with $T=10$, which is
long enough for $\psi$ to have essentially reached its steady-state value $0.3$.  All simulations are implemented in Python, using the NumPy,
Matplotlib, and SciPy libraries.

For both the schemes we use the step sizes $h\in\{0.05,0.1,0.3,0.5\}$. 
The explicit Euler scheme~\eqref{eq:explicit-Euler} is iterated directly, and at each step we check that the update stays inside the funnel, i.e.\ that the barrier
denominator $\psi(t_{k+1})-\|e_{k+1}\|$ remains positive.  As the step size increases, the discretization error accumulates until the numerical trajectory overshoots the (shrinking) funnel boundary, at which point the
simulation is terminated.  Figure~\ref{fig:explicit} shows this behavior. \\
For implicit Euler, we solve for the admissible root of \eqref{eq:implicit-Euler}
numerically, searching only inside the open funnel ball;
Proposition~\ref{prop:implicit-solvability} guarantees that such a root exists.  As Figure~\ref{fig:implicit} illustrates, the tracking error indeed never leaves the prescribed performance funnel.
\begin{figure}
    \centering
    \includegraphics[width=0.9\linewidth]{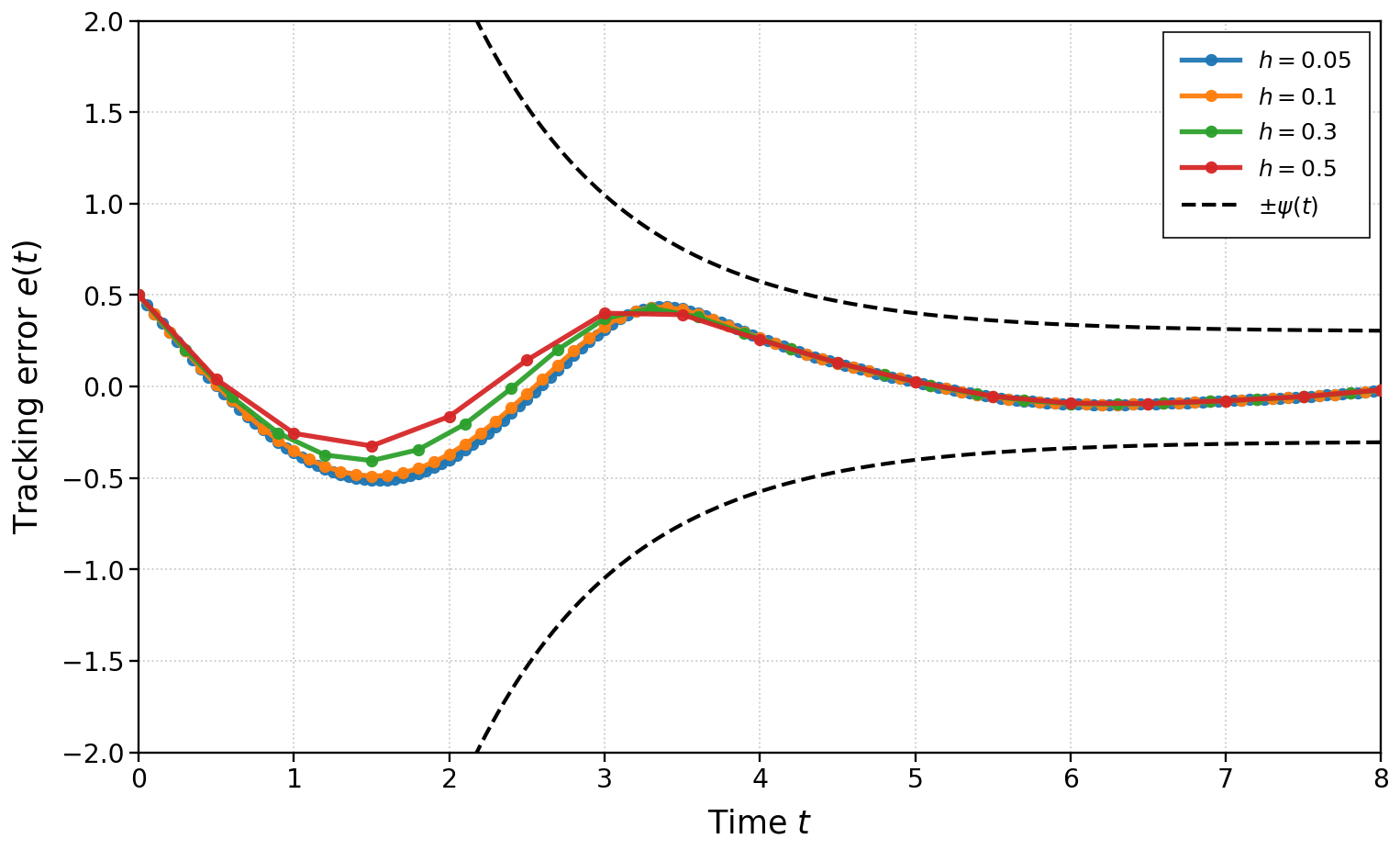}
    \caption{Simulation results for implicit Euler for different step sizes.}
    \label{fig:implicit}
\end{figure}
\begin{figure}
    \centering
    \includegraphics[width=0.9\linewidth]{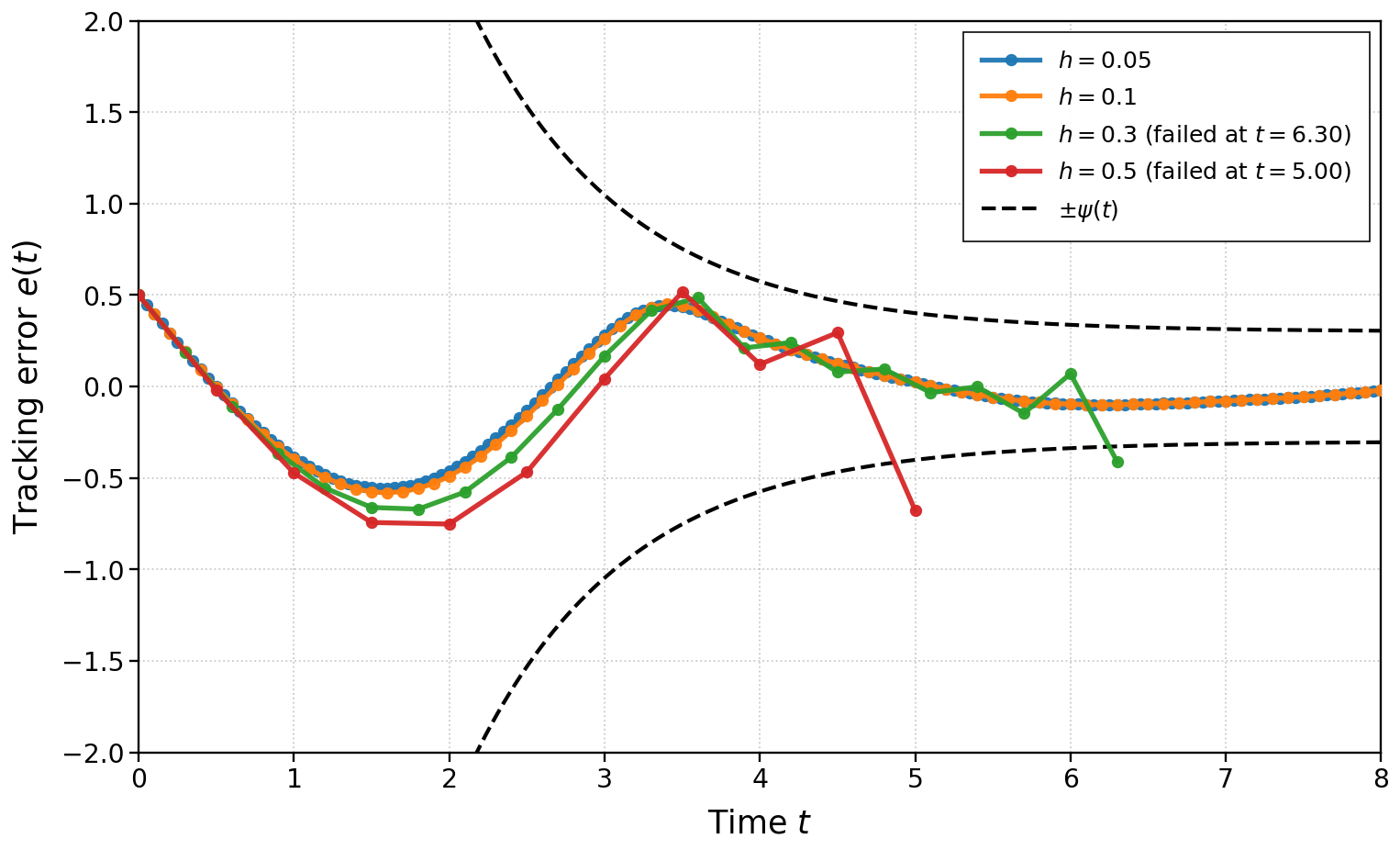}
    \caption{Simulation results for explicit Euler for different step sizes.}
    \label{fig:explicit}
\end{figure}
\section{Conclusion}\label{sec:conclusion}
We analyzed explicit and implicit Euler discretizations of the
continuous-time closed-loop dynamics generated by a funnel controller for a fully actuated nonlinear full-state system with scalar positive input gain. A direct scalar-gap argument first yielded a quantitative continuous-time funnel margin and the uniform regularity bounds needed for the error analysis.\\
For explicit Euler, the same gap mechanism gives a computable step-size condition under which all numerical values retain a prescribed nodal margin. The method converges with order one on finite horizons.  If the one-sided expansion of the plant is dominated by the funnel damping -- i.e., under
Assumption~\ref{ass:osl}, which bounds the plant's one-sided Lipschitz constant $\ell_f$ strictly below the minimal funnel damping rate
$g/\psi_+$ -- the forward map is contractive for sufficiently small steps and the first-order error estimate is uniform in time.\\
For implicit Euler, the singular radial term guarantees an admissible root for every step size and preserves the continuous-time margin at all nodes. Without Assumption~\ref{ass:osl}, sufficiently small steps already make the admissible root unique and yield first-order convergence on finite horizons. But under Assumption~\ref{ass:osl}, uniqueness holds for every step size, the
discrete flow is contractive in the sense of Lemma~\ref{lem:implicit-contractivity}, and the first-order error estimate is uniform in time.  The constants use only bounds on the plant, reference, funnel, and initial gap.

The analysis has some mathematical restrictions that suggest natural extensions. The scalar input gain $g>0$ was used essentially, both in the closed-form radial structure of $\gamma$ and in the strong-monotonicity estimate of
Lemma~\ref{lem:gamma-monotone}. Replacing $gI_m$ by a general positive definite high-frequency matrix $G$, or admitting relative degree greater than one, requires a separate, likely more involved, admissibility and error analysis.  Moreover, the results bound the error at the grid points
$t_k$ but say nothing about intersample behavior, which is a relevant question for implementation on physical hardware and connects to the sampled-data funnel-control literature~\cite{Lanza2024}. Finally, extending the discretization analysis to higher-order one-step or multistep methods, 
and to funnel controllers for systems of relative degree two or of
differential-algebraic type, are natural directions for future work.

\section*{Declaration on the use of AI-assisted tools}
OpenAI ChatGPT and Anthropic Claude were used to assist with language editing of the manuscript and refine the preliminary python scripts for the numerical simulations. All mathematical derivations, numerical results, and scientific conclusions are those of the authors. The authors independently verified all AI-assisted code and reviewed all AI-assisted content, and remain fully responsible for the content of the manuscript.

\section*{Funding}
Thomas Berger and Pushya Mitra acknowledge funding by the Deutsche Forschungsgemeinschaft (DFG, German
Research Foundation) -- Project-ID 362536361.

\bibliographystyle{elsarticle-num-names}
\bibliography{sample}

\end{document}